\documentclass{amsart} 
\usepackage{graphicx}
\usepackage{color}
\usepackage{amsmath}
\usepackage{amsthm}
\usepackage{amssymb}
\usepackage{amsfonts}

\newtheorem{theorem}{Theorem}[section]
\newtheorem{proposition}[theorem]{Proposition}

\newtheorem{lemma}[theorem]{Lemma}

\newcommand{\affil}[2]{}
\newcommand{\barticle}{}
\newcommand{\earticle}{}

\usepackage{tikz}
\usetikzlibrary{decorations.markings}

\DeclareMathOperator{\Lk}{Lk}
\DeclareMathOperator{\supp}{supp}
\DeclareMathOperator{\dist}{Dist}
\DeclareMathOperator{\FV}{FV}

\DeclareMathOperator{\FA}{FA}
\newcommand{\Z}{\mathbb{Z}}
\newcommand{\N}{\mathbb{N}}
\newcommand{\R}{\mathbb{R}}
\DeclareMathOperator{\alk}{\Lk_\uparrow}
\DeclareMathOperator{\dlk}{\Lk_\downarrow}
\DeclareMathOperator{\area}{Area}
\DeclareMathOperator{\vol}{Vol}
\DeclareMathOperator{\mass}{Mass}

\newcommand{\D}{\Delta}

\theoremstyle{definition}
\newtheorem*{remark}{Remark}

\begin{document}

\title{Homological and homotopical Dehn functions are different}
\author[A. Abrams]{Aaron Abrams}
\address{
Aaron Abrams\\
Mathematics Department\\
Robinson Hall\\ 
Washington and Lee University\\
Lexington VA 24450}
\email{abrams.aaron@gmail.com}
\author[N. Brady]{Noel Brady}
\address{
Noel Brady\\
Department of Mathematics\\
University of Oklahoma\\
601 Elm Ave\\
Norman, OK 73019}
\email{nbrady@math.ou.edu}
\author[P. Dani]{Pallavi Dani}
\address{
Pallavi Dani\\
Department of Mathematics\\
Louisiana State University\\
Baton Rouge, LA 70803-4918}
\email{pdani@math.lsu.edu}
\author[R. Young]{Robert Young}
\address{
Robert Young\\
Department of Mathematics\\
University of Toronto\\
40 St.\ George St., Room 6290\\
Toronto, ON  M5S 2E4\\
Canada}
\email{ryoung@math.toronto.edu}


\thanks{
The authors are thankful to the American Institute of Mathematics for supporting
this research and to Dan Guralnik and Sang Rae Lee for helpful discussions. We gratefully acknowledge additional funding from National Science Foundation award DMS-0906962 (to NB), the Louisiana Board of Regents 
Support Fund contract LEQSF(2011-14)-RD-A-06 (to PD), and the
Natural Sciences and Engineering Research Council of Canada (to RY)}

\barticle

\begin{abstract}
  The homological and homotopical Dehn functions are different
  ways of measuring the difficulty of filling a closed curve inside a
  group or a space.  The homological Dehn function measures fillings of cycles by
  chains, while the homotopical Dehn function measures fillings of
  curves by disks.  Since the two definitions involve different sorts
  of boundaries and fillings, there is no \emph{a priori} relationship
  between the two functions, but prior to this work there were no known examples of
  finitely-presented groups for which the two functions differ.  This paper gives the
  first such examples, constructed by amalgamating a free-by-cyclic
  group with several Bestvina-Brady groups.
\end{abstract}

 \maketitle

\section{Introduction}

The classical isoperimetric problem is to determine the
maximum area that can be enclosed by a closed curve of fixed length in the plane.
This problem has been generalized in many different ways. 
For example, in a metric space $X$, one can study the homotopical filling area
of a curve $\gamma$, denoted $\delta_X(\gamma)$ and defined to be the infimal area of a disk
whose boundary is $\gamma$.  This leads to the idea of the \emph{homotopical Dehn function}
of $X$, defined as the
smallest function $\delta_X$ such that any closed curve $\gamma$
of length $\ell$ has filling area at most $\delta_X(\ell)$.  A
remarkable result of Gromov \cite{GroAsymp, BridsonSurv} states that if $X$ is simply connected and
if there is a group $G$ that acts on $X$ geometrically (i.e.,
cocompactly, properly discontinuously, and by isometries), then the
growth rate of $\delta_X$ depends only on $G$; indeed, $\delta_X$ is
related to the difficulty of determining whether a product of
generators of $G$ represents the identity.  We can thus define the
Dehn function of a group to be the (homotopical) Dehn function of any
simply-connected space that the group acts on geometrically; this is
well-defined up to some constants (for details, see
Sec.~\ref{sec:prelims}).

Another way to generalize the isoperimetric problem is to consider
fillings of 1-cycles by 2-chains instead of fillings of curves by disks.
If $\alpha$ is a $1$-cycle in $X$, we can define its homological
filling area $\FA(\alpha)$ to be the infimal mass of a $2$-chain in
$X$ with integer coefficients whose boundary is $\alpha$.  This leads to
the \emph{homological Dehn function} $\FA_X$, defined as the smallest 
function such that any $1$-chain $\alpha$ of mass at most $\ell$
has a homological filling of area at most $\FA_X(\ell)$.  Like its
homotopical counterpart, $\FA_X$ can be used to construct a group
invariant:  if $H_1(X)=0$ and if there is a group $G$ that acts on $X$
geometrically, then the growth rate of $\FA_X$ depends only on $G$ and
we can define $\FA_G = \FA_X$.  Again this is well-defined up to 
constants.

The exact relationship between these two filling functions has
been an open question for some time.  The homological Dehn function
deals with a wider class of possible fillings (surfaces of arbitrary
genus) and a wider class of possible boundaries (sums of arbitrarily
many disjoint closed curves), so it is not \emph{a priori} clear
whether $\FA_H$ is always the same as $\delta_H$ when they are both
defined.  Some hints that they may differ come from a construction
of groups with unusual finiteness properties due to Bestvina and Brady
\cite{BestBrad}.  They used a combinatorial version of Morse theory to
construct a group which is $FP_2$ but not finitely presented.  Such a
group does not act geometrically on any simply connected space but
does acts geometrically on a space with trivial first homology, so its
homological Dehn function is defined, but its homotopical Dehn
function is undefined.

In this paper, we will construct a family of finitely presented groups
such that $\FA_H$ grows strictly slower than $\delta_H$.
Specifically, we will show:
\begin{theorem}\label{thm:THM}
  For every $d\in\N\cup\{\infty\}$, there is a CAT(0) group $G$
  containing a finitely presented subgroup $H$ such that 
  $\FA_H(\ell)\preceq \ell^5$ and the homotopical Dehn function 
  satisfies
  \begin{align*}
   & \ell^d \preceq \delta_H(\ell)
   &&  \text{if }d\in\N,\\
   & e^\ell \preceq \delta_H(\ell) && \text{if }d=\infty.
  \end{align*}
\end{theorem}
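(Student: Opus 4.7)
The plan is to construct $G$ as an amalgamated product of a free-by-cyclic group $K = F_2 \rtimes_\phi \Z$ with several Bestvina-Brady groups $BB_{L_i}$, glued along cyclic edge subgroups. The automorphism $\phi$ is chosen to control distortion: for finite $d$, take $\phi$ with polynomial growth of degree $d$; for $d = \infty$, take $\phi$ a hyperbolic (exponentially growing) automorphism. Each right-angled Artin group $A_{L_i}$ containing $BB_{L_i}$ is CAT(0), and $K$ is CAT(0) via a nonpositively-curved 2-complex structure. Amalgamating along cyclic (hence locally geodesic) edge subgroups produces a CAT(0) structure on $G$. The flag complexes $L_i$ are taken to be simply connected, so each $BB_{L_i}$ is finitely presented by the Bestvina-Brady criterion. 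The subgroup $H$ is then defined as the kernel of a homomorphism $G \to \Z$ that sends each standard generator of each $A_{L_i}$ to $1$ (so restricts to the Bestvina-Brady map on each $BB_{L_i}$) and sends the stable letter of $K$ to $1$, but is trivial on $F_2 \subset K$; finite presentation of $H$ then follows from a graph-of-groups version of Bestvina-Brady Morse theory.

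For the upper bound $\FA_H(\ell) \preceq \ell^5$, I would fill a $1$-cycle $\alpha \subset H$ in two stages. First, the CAT(0) filling inequality in $G$ yields an integer $2$-chain $c$ with $\partial c = \alpha$ of mass $O(\ell^2)$ supported in $G$. Second, I would apply a Morse-theoretic correction through the homomorphism $G \to \Z$ whose kernel is $H$: each $2$-cell of $c$ that escapes $H$ is replaced by its image under the standard Bestvina-Brady retraction into the level set, at a mass cost polynomial in its height. Because heights are bounded by $O(\ell)$ and each retraction has polynomial cost, summing yields total mass $O(\ell^5)$, with the precise exponent determined by the link combinatorics of the chosen $L_i$.

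For the lower bound on $\delta_H$, consider the family of loops $\gamma_n$ in $H$ representing commutators of the form $[a,\phi^n(a)]$ where $a \in F_2 \subset K \subset H$. Although $\phi^n(a)$ has length $\Theta(n^d)$ (respectively $\Theta(e^n)$) in the generating set of $H$, the loop $\gamma_n$ can be expressed using the amalgamation structure by routing through the Bestvina-Brady pieces to replace the stable letter: this yields representatives of length $O(n)$ in $H$. Any disk $D$ filling $\gamma_n$ in $H$ must be projected, via the retraction onto the vertex group $K$ afforded by the graph-of-groups structure, to a disk filling $[a,\phi^n(a)]$ in $K$ itself; since the Dehn function of the free-by-cyclic group $K$ is $\ell^{d+1}$ or exponential respectively, the filling area of $\gamma_n$ is bounded below by $n^d$ (respectively $e^n$).

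The main obstacle is this projection step in the lower bound. One must show that a hypothetical low-area disk filling in $H$ cannot use the Bestvina-Brady pieces to ``shortcut'' the distortion of $\phi^n(a)$ in a way that is invisible after projection to $K$. The BB groups have their own nontrivial Dehn functions (generally polynomial of low degree), and the risk is that commutators among BB generators might unexpectedly reduce area. The resolution should come from choosing the $L_i$ and the gluing carefully so that the only efficient routing between $a$ and $\phi^n(a)$ in $G$ passes through the stable letter of $K$, and then verifying that the normal-form/projection procedure for the amalgam is area non-increasing when applied to van Kampen diagrams in $H$.
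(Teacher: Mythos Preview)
Your construction has a fundamental error that destroys the mechanism producing the gap. You take the flag complexes $L_i$ to be simply connected so that each $BB_{L_i}$ is finitely presented; but then the Bestvina--Brady level sets in those pieces are themselves simply connected, and curves there can be filled by \emph{disks} just as cheaply as by chains. There is then no source for a homological/homotopical discrepancy anywhere in $H$, and your upper bound argument would in fact bound $\delta_H$, contradicting the lower bound. The paper does the opposite: it takes each $L_i$ to be a \emph{Thompson complex} $Y$, i.e.\ a flag $2$-complex with $\pi_1(Y)$ an infinite simple group. Then $H_1(Y)=0$ while $\pi_1(Y)\ne 0$, so the BB level set $L_A$ (an infinite wedge of copies of $Y$) has trivial $H_1$ but infinitely generated $\pi_1$. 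Cycles can be filled cheaply by chains inside $L_A$, while disks are forced out. Finite presentability of $H$ comes not from the BB pieces --- those are only $FP_2$, not finitely presented --- but from the gluing: the edge groups are copies of $F_2\times F_2$ (not cyclic), attached along a $4$-cycle in $Y$ representing an infinite-order element $g\in\pi_1(Y)$, and simplicity of $\pi_1(Y)$ makes the ascending and descending links of the amalgam simply connected.

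Your lower-bound sketch also goes astray. There is no area-nonincreasing retraction of the amalgam onto the free-by-cyclic vertex group, because the edge groups are not retracts of the BB vertex groups. The paper's argument is quite different: it shows that any disk in $L$ filling a hard curve $\gamma\subset L_Q$ must have image whose intersection with $L_Q$ already supports a $2$-chain filling $\gamma$. The key lemma analyzes a punctured disk in $L_A'$ with boundary in the edge level sets and uses the $\pi_1$-injectivity of the attaching square $\lozenge\hookrightarrow Y$ (this is exactly where infinite order of $g$ is used) to show that the outer boundary is a real linear combination of inner boundaries lying in the \emph{same} copy of $L_E$. Inducting over the tree of pieces then traps a chain filling of $\gamma$ inside $\tau(D^2)\cap L_0$, forcing $\operatorname{area}\tau\succeq\operatorname{Dist}_B(\ell)$.
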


\begin{remark}  Using methods of \cite{BrGuLe}, one can show that in 
the $d\in\N$ case, the group $H$
constructed in the theorem satisfies $\delta_H(\ell)\preceq \ell^{d+3}$.
\end{remark}

Our construction uses methods of Brady, Guralnik, and
Lee \cite{BrGuLe} to create a hybrid of a Bestvina-Brady group
with a group having large Dehn function.  The resulting group is finitely presented, so 
both $\delta_H$ and $\FA_H$ are defined, and we
will show that the unusual finiteness properties coming from the
Bestvina-Brady construction lead to a large gap between homological
and homotopical filling functions.  

Similar results are known for higher-dimensional versions of $\delta$
and $\FA$.  One can define $k$-dimensional homotopical and homological
Dehn functions by considering fillings of $k$-spheres or
$k$-cycles by $(k+1)$-balls or $(k+1)$-chains; by historical accident,
the corresponding homotopical and homological filling functions
have come to be called $\delta^k_X$ and $\FV^{k+1}_X$ respectively.  
The relationship
between $\delta^k_X$ and $\FV^{k+1}_X$ is better understood when
$k\ge 2$, because in this case the Hurewicz theorem can be used to 
replace cycles and chains by spheres and balls.

If $X$ is $k$-connected and $\beta$ is a $(k+1)$-chain with $k\ge 2$,
then the Hurewicz theorem can be used to show that $\beta$ is the
image of the fundamental class of a ball under a map $b:D^{k+1}\to X$
with $\vol b=\mass \beta$.  Thus, if $a:S^{k}\to X$ is a map of a
sphere and $\alpha$ is the image of the fundamental class of $S^{k}$
under $a$, then $\delta^{k}_X(a)= \FV^{k+1}_X(\alpha)$, so
$\delta^{k}_X\preceq\FV^{k+1}_X$ for $k\ge 2$ (see Appendix 2 of
\cite{GroFRM} and \cite{White}).

Likewise, if $X$ is $k$-connected and $\alpha$ is a $k$-cycle for
$k\ge 3$, then the Hurewicz theorem can be used to show that $\alpha$
is the image of the fundamental class of a sphere under a map
$a:S^k\to X$ such that $\vol a=\mass \alpha$ (see Remark 2.6.(4) of
\cite{BraBriForSha}).  Consequently, since
$\delta^{k}_X(a)=\FV^{k+1}_X(\alpha)$, we have $\FV^{k+1}_X\sim
\delta^{k}_X$ for $k\ge 3$.

Thus, if $k\ge 3$ and if $H$ is a group which acts geometrically on a
$k$-connected complex, the above results imply that
$\delta^{k}_H\sim\FV^{k+1}_H$.  When $k=2$, Young constructed examples
of groups for which $\delta^{2}_H\precnsim\FV^{3}_H$ \cite{YoungHom}.
The examples in this paper are the first known examples of groups for
which $\delta^1_H\not\sim\FV^{2}_H$.





\section{Preliminaries}\label{sec:prelims}

\subsection{Dehn functions}
For a full exposition of Dehn functions, consult \cite{BridsonSurv}.
We will briefly review the definitions that we will need.  Let $X$ be
a simply connected riemannian manifold or simplicial complex.  If
$\alpha:S^1\to X$ is a Lipschitz map, define the \emph{homotopical
  filling area} of $\alpha$ to be
$$\delta_X(\alpha)=\mathop{\inf_{\beta:D^2\to
    X}}_{\beta|_{S^1}=\alpha}\area \beta,$$ where $\beta$ ranges over
Lipschitz maps $D^2\to X$ which agree with $\alpha$ on $\partial
D^2$.  Since $X$ is simply-connected and any continuous map can be
approximated by a Lipschitz map, such maps exist.
We can define an invariant of $X$ by letting
$$\delta_X(\ell)=\sup_{\ell(\alpha)\le \ell}\delta_X(\alpha).$$
We call this the \emph{homotopical Dehn function} of $X$.

We define a relation on functions $\R^{\ge 0}\to \R^{\ge 0}$ 
by $f\preceq g$ if there is a $c>0$ such that for all $n$,
$$f(n)\le c g(c n+c)+cn+c.$$
If $f\preceq g$ and $g\preceq f$, we write $f\sim g$.  Thus
$\sim$ distinguishes all functions $x^\mu$ for $\mu\ge 1$,
and all functions of the form $\lambda^x$ are equivalent for $\lambda>1$.
Gromov stated (and Bridson proved in \cite{BridsonSurv}) that
if $H$ acts geometrically on $X$ (for instance, if $H=\pi_1(M)$ and
$X=\widetilde{M}$ for some compact $M$), then $\delta_X(n)$ is determined
up to $\sim$-equivalence by $H$.  If $H$ is finitely presented, then $H$ acts
geometrically on the universal cover $\widetilde{X}_H$ of a presentation
complex, which is a 2-complex $X_H$ with $\pi_1(X_H)=H$.  Thus
$\delta_H:=\delta_{\widetilde{X}_H}$ is well-defined up to $\sim$.

To define the homological invariant $FA_H$, suppose that $X$ is
a polyhedral complex with $H_1(X)=0$.  If
$\alpha$ is a 1-cycle in $X$, we let
$$\FA_X(\alpha)=\inf_{\partial \beta=\alpha}\mass \beta,$$
where $\mass \beta$ is defined to be $\sum |b_i|$ if $\beta=\sum b_i \Delta_i$ 
is a sum of faces of $X$ with integer coefficients.
We can define an invariant of $X$ by letting
$$\FA_X(\ell)=\sup_{\mass \alpha\le \ell}\FA_X(\alpha).$$
We call this the \emph{homological Dehn function} of $X$.  Like the
homotopical Dehn function, if $H$ acts geometrically on $X$, then
$\FA_X$ is determined up to $\sim$ by $H$, and if $X_H$
is a presentation complex for a finitely presented group $H$, we define
$\FA_H=\FA_{\widetilde{X}_H}$.

\subsection{Right-angled Artin groups}\label{subsec:RAAG}
If $\Lambda$ is a simple graph (i.e., without loops or multiple
edges), we can define a \emph{right-angled Artin group} (RAAG) based on
$\Lambda$.  If $V(\Lambda)$ and $E(\Lambda)$ are the vertex set and
edge set of $\Lambda$, we define 
$$A_\Lambda=\langle V(\Lambda)\mid [i(e),t(e)]=1 \text{ for all $e\in
  E(\Lambda)$}\rangle,$$
where $i(e)$ and $t(e)$ are the endpoints of $e$.  We say that $\Lambda$ is the
\emph{defining graph} of $A_\Lambda$.  These RAAGs generalize free
groups and free abelian groups; if $\Lambda$ is a complete graph, there is
an edge between every pair of vertices, so every pair of generators of
$A_\Lambda$ commutes and $A_\Lambda$ is free abelian.  On the other hand, if
$\Lambda$ has no edges, then $A_\Lambda$ is a free group.

A full exposition of RAAGs can be found in \cite{Charney}.  One
important fact that we will use is that for every $\Lambda$, there is
a one-vertex locally CAT(0) cube complex $X_\Lambda$ with
$\pi_1(X_\Lambda)=A_\Lambda$; this is called the \emph{Salvetti
  complex}.  This complex can be built directly from the graph
$\Lambda$; it has one vertex, one edge for every vertex of $\Lambda$,
one square for each edge of $\Lambda$, and one $n$-cube for each
$n$-vertex clique in $\Lambda$.

Bestvina and Brady \cite{BestBrad} used RAAGs to construct subgroups
of nonpositively-curved groups with unusual finiteness properties.
They defined a homomorphism $h_{A_\Lambda}:A_\Lambda\to \Z$ which sends each
generator of $A_\Lambda$ to 1 and, viewing $A_\Lambda$ as the 
$0$-skeleton of $\widetilde{X}_\Lambda$, extended it to a map
$h_{X_\Lambda}:\widetilde{X}_\Lambda\to \R$.  This map is linear on each cube of
$\widetilde{X}_\Lambda$, so the level set $L_{A_\Lambda}:=h_{X_\Lambda}^{-1}(0)$
can be given the structure of a polyhedral complex.  The subgroup
$H_{A_\Lambda}:=\ker h_{A_\Lambda}$ acts vertex-transitively on $L_{A_\Lambda}$, so if
$L_{A_\Lambda}$ is connected, the $1$-skeleton of $L_{A_\Lambda}$ is a
Cayley graph for $H_{A_\Lambda}$.  In this case, we can construct a generating set
for $H_{A_\Lambda}$ explicitly: each edge of $L_{A_\Lambda}$ is a diagonal of a
square of $\widetilde{X}_\Lambda$, so $H_{A_\Lambda}$ has a generating set
consisting of elements of the form $ab^{-1}$, where $a$ and $b$ are
generators of $A_\Lambda$.

Recall that a complex is \emph{flag} if every clique of $n$ vertices spans an $(n-1)$-dimensional simplex.
Bestvina and Brady proved that the topology of $\Lambda$ determines
the topology of $h_{X_\Lambda}^{-1}(0)$:
\begin{theorem}[\cite{BestBrad}]\label{thm:BestBrad}
  If $\Lambda$ is the 1-skeleton of a flag complex $Y$, and 
  $h_{A_\Lambda},h_{X_\Lambda}$ are the
  maps defined above, then $H_{A_\Lambda}=\ker h_{A_\Lambda}$ 
  acts on the complex $L_{A_\Lambda}=h_{X_\Lambda}^{-1}(0)$, which 
  is homotopy equivalent to a 
  wedge product of infinitely many copies of $Y$, indexed by the vertices in 
  $\widetilde{X}_\Lambda\setminus L_{A_\Lambda}$.  
  In fact, $h_{X_\Lambda}^{-1}(0)$ is a union of infinitely many scaled copies of $Y$.
\end{theorem}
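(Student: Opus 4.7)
The proof uses Bestvina--Brady's combinatorial Morse theory on the CAT(0) cube complex $\widetilde{X}_\Lambda$, taking $h_{X_\Lambda}$ as the Morse function. First, I would describe the polyhedral structure of $L = h_{X_\Lambda}^{-1}(0)$ cube-by-cube: each $n$-cube $C$ of $\widetilde{X}_\Lambda$ has a unique minimum vertex $v_-$ under $h$, and parametrizing $C \cong [0,1]^n$ gives $h|_C(x) = h(v_-) + \sum_i x_i$, so $L \cap C$ is the hypersimplex $\{x \in [0,1]^n : \sum_i x_i = -h(v_-)\}$. This is nonempty precisely when $-h(v_-) \in \{0, 1, \ldots, n\}$, and in the extreme cases $-h(v_-) \in \{1, n-1\}$ it is a standard $(n-1)$-simplex.

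The crucial link computation is: at every vertex $w$ of $\widetilde{X}_\Lambda$, both $\alk(w)$ and $\dlk(w)$ are isomorphic to $Y$. Indeed, the $n$-cubes having $w$ as minimum corner correspond bijectively to $n$-cliques in $\Lambda$ (the labels of the upward edges must mutually commute), which by the flag hypothesis are the $(n-1)$-simplices of $Y$; face identifications of cubes match those of simplices, so $\alk(w) \cong Y$, and symmetrically $\dlk(w) \cong Y$.

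Using these links, I would exhibit the claimed scaled copies of $Y$ inside $L$. For each vertex $v$ with $h(v) = -1$, the cubes having $v$ as minimum corner meet $L$ in standard simplices (by the cube computation), and these simplices assemble via the link bijection into a subcomplex of $L$ isomorphic to $Y$ at a natural scale. A symmetric construction applies at $h(v) = 1$, and for vertices $v$ with $|h(v)| \ge 2$, a similar but iterated construction inside the star of $v$ yields a scaled copy of $Y$ sitting inside $L$.

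Finally, to upgrade this to the homotopy equivalence $L \simeq \bigvee Y$, I would use the filtration $\widetilde{X}^k = h_{X_\Lambda}^{-1}[-k, k]$ with $\widetilde{X}^0 = L$ and $\widetilde{X}^\infty = \widetilde{X}_\Lambda$. Each inclusion $\widetilde{X}^{k-1} \hookrightarrow \widetilde{X}^k$ is, up to homotopy, the attachment of cones on copies of $Y$, one for each vertex at height $\pm k$, via the link identification. Since $\widetilde{X}_\Lambda$ is contractible, standard cofibration arguments identify the quotient $\widetilde{X}_\Lambda / L$ with both $\Sigma L$ and (via the cone attachments) a wedge of copies of $\Sigma Y$ indexed by vertices in $\widetilde{X}_\Lambda \setminus L$; together these give the claimed wedge decomposition. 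The main obstacle will be arranging the iterated cone attachments so that the resulting wedge is clean rather than a twisted gluing of suspensions, and this is where the concrete scaled copies of $Y$ from the previous step are essential: they provide explicit retractions of each $\widetilde{X}^k$ onto $L$ that force successive attaching maps to factor through $L$ up to homotopy.
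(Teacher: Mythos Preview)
The paper does not prove this theorem itself; it is stated with attribution to Bestvina and Brady, and is followed only by a paragraph naming combinatorial Morse theory and ascending/descending links as ``the main tool used to prove this theorem.'' There is therefore no in-paper argument to compare against.

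Your outline is essentially a sketch of the Bestvina--Brady proof, and the ingredients you list are the correct ones: the hypersimplex description of $L\cap C$ for each cube $C$, the identification $\alk(w)\cong\dlk(w)\cong Y$ coming from the flag hypothesis, the explicit scaled copies of $Y$ sitting in $L$ (one for each vertex off the level set), and the filtration $h^{-1}[-k,k]$ built from $L$ by successively coning off copies of $Y$. One step deserves more care. Your cofibration argument yields $\widetilde{X}_\Lambda/L\simeq \Sigma L$ and, via the cone attachments, $\widetilde{X}_\Lambda/L\simeq \bigvee_\alpha \Sigma Y$; but $\Sigma L\simeq \bigvee_\alpha \Sigma Y$ does not by itself give $L\simeq \bigvee_\alpha Y$, since desuspension is not generally valid. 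You rightly flag this as the main obstacle. Bestvina and Brady do not pass through suspensions: they use the explicit scaled copies of $Y$ to produce a map $\bigvee_\alpha Y\to L$ and argue directly that it is a homotopy equivalence, tracking how the Morse filtration kills homotopy. Your proposed fix---using those same scaled copies to build retractions that force the attaching maps to factor through $L$---is in the right spirit, but that is precisely where the substantive work lies, and it should be carried out rather than left as a remark.
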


The main tool used to prove this theorem is a combinatorial version of
Morse theory.  If $X$ is a complex, $x\in X$ is a vertex, and $h:X\to
\R$ is a function which is linear on each cell and is not constant on
any edge, one may define subcomplexes $\alk(x)$ and $\dlk(x)$ of the link
$\Lk(x)$ called the \emph{ascending} and \emph{descending links} of $x$.  To define these, we
identify the vertices of $\Lk(x)$ with the neighbors of
$x$.  The ascending link $\alk(x)$ is the full subcomplex spanned by
vertices $y$ such that $h(y)>h(x)$ and likewise the descending link
$\dlk(x)$ is the full subcomplex spanned by vertices $y$ such that
$h(y)<h(x)$.  These ascending and descending links play a similar role
to the ascending and descending manifolds in classical Morse theory.

If $X$ has one vertex, then all vertices of $\widetilde{X}$
have the same link, so we will write 
$\Lk(\widetilde{X})$, $\alk(\widetilde{X})$, and
$\dlk(\widetilde{X})$.  The link $\Lk(\widetilde{X}_\Lambda)$ has two
vertices $s^\pm$ for each generator $s$ of $A_\Lambda$; the ascending
link $\alk(\widetilde{X}_\Lambda)$ is spanned by the $s^+$'s, and the
descending link $\dlk(\widetilde{X}_\Lambda)$ is spanned by the $s^-$'s.
If $Y$ is the flag complex with 1-skeleton $\Lambda$, then
$\alk(\widetilde{X}_\Lambda)$ and $\dlk(\widetilde{X}_\Lambda)$ are isomorphic
to $Y$.


\subsection{Labeled oriented graph groups}

We can also construct groups using \emph{labelled oriented graphs}
(LOGs).  A LOG on a set $S$ is a directed
multigraph $\Gamma$ with vertex set $S$ and a labeling of the
edges given by $l:E(\Gamma)\to S$; loops and multiple edges are
allowed.  We say that $\Gamma$ presents the group:
$$B_\Gamma:=\langle S\mid i(e)^{l(e)}=t(e), e\in E(\Gamma)\rangle,$$
where the notation $a^b$ represents the conjugation $b^{-1}ab$ and
where $i:E(\Gamma)\to S$ and $t:E(\Gamma)\to S$ are the functions
taking an edge to its start and end.  Since each relation has length
4, the presentation 2-complex $X_{\Gamma}$ of ${B_\Gamma}$ is a
2-dimensional cube complex.

Note that although $i(e)=t(e)$ is possible, we may assume that $i(e)\ne l(e)\ne t(e)$
since otherwise we could contract such an edge without changing the group.  This
implies that $\Lk(X_{\Gamma})$ contains no loops or edges of the form $s^+s^-$.

As with RAAGs, we can apply Morse theory to LOG groups.
Let $h_{B_\Gamma}:{B_\Gamma}\to \Z$ be the homomorphism mapping
each $s\in S$ to $1$.  This homomorphism can be extended linearly
over each cell of $\widetilde{X}_{\Gamma}$ to get a map
$h_{X_\Gamma}:\widetilde{X}_{\Gamma}\to \R$.  Consider the level set
$L_{B_\Gamma}=h_{X_\Gamma}^{-1}(0)\subset \widetilde{X}_{\Gamma}$.  
As in the RAAG case, the group
$\ker h_{B_\Gamma}$ acts on $L_{B_\Gamma}$ vertex-transitively, so if 
$L_{B_\Gamma}$ is connected, then its 1-skeleton is a Cayley graph for 
$\ker h_{B_\Gamma}$.  Edges in $L_{B_\Gamma}$ are
diagonals of squares in $X_{\Gamma}$, so each orbit of squares labeled
$a^b=c$ contributes a generator that can be written as $cb^{-1}$
or $b^{-1}a$.  See Fig.~\ref{fig:morse}.

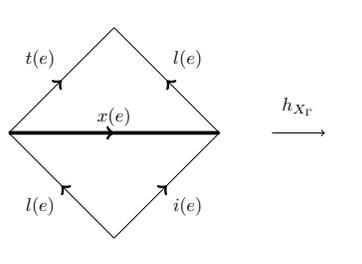
\begin{figure}[ht]
\begin{center}
\scalebox{.7}
{
\begin{tikzpicture}
\draw (-1,1) -- (-2,2);
\draw (-1,3) -- (0,4);
\draw (1,3) -- (0,4);
\draw (1,1) -- (2,2);

\draw[
    decoration={markings,mark=at position 1 with {\arrow[ultra thick]{>}}},
    postaction={decorate}
    ]
   (0,0) -- (-1,1);    
\draw[
    decoration={markings,mark=at position 1 with {\arrow[ultra thick]{>}}},
    postaction={decorate}
    ]
   (-2,2) -- (-1,3);    
\draw[
    decoration={markings,mark=at position 1 with {\arrow[ultra thick]{>}}},
    postaction={decorate}
    ]
   (2,2) -- (1,3);    
\draw[
    decoration={markings,mark=at position 1 with {\arrow[ultra thick]{>}}},
    postaction={decorate}
    ]
   (0,0) -- (1,1);    

\draw[line width=2pt,
    decoration={markings,mark=at position 1 with {\arrow[ultra thick]{>}}},
    postaction={decorate}
    ]
   (-2,2) -- (0,2);    

\draw[line width=2pt] (0,2) -- (2,2);

\draw[->] (3,2)--(4,2);
\draw(3.5,2.5) node{$h_{X_\Gamma}$};
\draw (5,-0.5) -- (5,4.5);
\fill[black] (5,0) circle (2pt);
\fill[black] (5,2) circle (2pt);
\fill[black] (5,4) circle (2pt);

\draw (-1.4,.6) node{$l(e)$};
\draw (1.4,.6) node{$i(e)$};
\draw (1.4,3.4) node{$l(e)$};
\draw (-1.4,3.4) node{$t(e)$};
\draw (0,2.3) node{$x(e)$};

\end{tikzpicture}
}
\caption{The map $h_{X_\Gamma}$ on a $2$-cell in $\widetilde{X}_\Gamma$.  The group element $x(e)$ is in $\ker h$.}
\label{fig:morse}
\end{center}
\end{figure}

As was the case with RAAGs, the link
$\Lk(\widetilde{X}_{\Gamma})$ has two vertices $s^\pm$ for each vertex $s$ of
$\Gamma$.  The ascending link $\alk(\widetilde{X}_{\Gamma})$ is the full
subcomplex of $\Lk(\widetilde{X}_{\Gamma})$ spanned by the $s^+$'s, and the descending
link $\dlk(\widetilde{X}_{\Gamma})$ is spanned by the $s^-$'s.
Brady showed:

\begin{theorem}\cite[Prop.~2.2.7]{BradyNonPos}\label{thm:LOGGs}
Suppose ${B_\Gamma}$ is a group presented by a LOG $\Gamma$ such that:
\begin{itemize}
\item the ascending and descending links $\alk({\widetilde{X}_\Gamma})$ and 
$\dlk({\widetilde{X}_\Gamma})$ are trees, and
\item the full link $\Lk({\widetilde{X}_\Gamma})$ has girth at least 4.
\end{itemize}
Then (1) $X_{\Gamma}$ is locally CAT(0), hence a $K(B_\Gamma,1)$; 
(2) the level set $L_{B_\Gamma}$ is a tree; and (3) $B_\Gamma$ is isomorphic to the 
free-by-cyclic group $F_n\rtimes_\phi \Z$, where $F_n\cong \ker h_{B_\Gamma}$.
\end{theorem}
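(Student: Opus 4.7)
The plan is to establish (1), (2), and (3) in sequence.

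\emph{Part (1).} This is an application of Gromov's link condition. Since $X_\Gamma$ is a $2$-dimensional cube complex with a single vertex, it is locally CAT(0) iff $\Lk(\widetilde{X}_\Gamma)$ is a flag simplicial complex, which for a $1$-dimensional link means: no loops, no multi-edges, and no triangles. The conventions stated earlier in the paper rule out loops and $s^+ s^-$-edges, and the girth-$\ge 4$ hypothesis rules out the remaining bigons and triangles. Hence $X_\Gamma$ is locally CAT(0), so $\widetilde{X}_\Gamma$ is CAT(0) and contractible, making $X_\Gamma$ a $K(B_\Gamma,1)$.

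\emph{Part (2).} Apply Bestvina--Brady combinatorial Morse theory to $h_{X_\Gamma}$. The key fact is that when $t$ crosses an integer value, the sublevel set $h_{X_\Gamma}^{-1}((-\infty,t])$ changes up to homotopy by coning off descending links of the vertices being crossed; dually, superlevel sets change by coning off ascending links. Because every ascending and descending link is a tree, and trees are contractible, each such transition is a homotopy equivalence, so both
\[
A := h_{X_\Gamma}^{-1}((-\infty,0]) \qquad\text{and}\qquad B := h_{X_\Gamma}^{-1}([0,\infty))
\]
are contractible. Their union is $\widetilde{X}_\Gamma$ (contractible by Part (1)), their intersection is $L_{B_\Gamma}$, and the inclusions $L_{B_\Gamma}\hookrightarrow A,B$ are cofibrations in the polyhedral setting. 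The homotopy pushout formula identifies $\widetilde{X}_\Gamma \simeq A\cup_{L_{B_\Gamma}} B \simeq \Sigma L_{B_\Gamma}$, so $\Sigma L_{B_\Gamma}$ is contractible and $L_{B_\Gamma}$ has trivial reduced homology. Since $h_{X_\Gamma}$ is nonconstant on every edge of $\widetilde{X}_\Gamma$, $L_{B_\Gamma}$ misses all vertices of $\widetilde{X}_\Gamma$ and is therefore a $1$-dimensional CW complex; a connected, acyclic $1$-complex is a tree.

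\emph{Part (3).} The kernel $\ker h_{B_\Gamma}$ acts on $L_{B_\Gamma}$ as a subgroup of deck transformations. Because the deck action on $\widetilde{X}_\Gamma$ is free and each edge of $L_{B_\Gamma}$ lies in the interior of a unique $2$-cell of $\widetilde{X}_\Gamma$, no nontrivial element of $\ker h_{B_\Gamma}$ fixes or inverts any edge of $L_{B_\Gamma}$. A group acting freely and without inversions on a tree is free (Bass--Serre), so $\ker h_{B_\Gamma}\cong F_n$ for some $n$ (with $n$ equal to the number of square-orbits in $X_\Gamma$). Finally, any generator $s\in S$ has $h_{B_\Gamma}(s)=1$, so the map $1\mapsto s$ splits the short exact sequence
\[
1 \longrightarrow F_n \longrightarrow B_\Gamma \xrightarrow{\ h_{B_\Gamma}\ } \Z \longrightarrow 1,
\]
giving $B_\Gamma \cong F_n \rtimes_\phi \Z$ with $\phi$ conjugation by $s$.

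\emph{Where the work lies.} The substantive step is Part (2): one must extract contractibility of the level set from contractibility of ascending and descending links. The pushout-suspension reduction is clean, but it rests on the Bestvina--Brady/Brady Morse-theoretic principle that contractible descending (resp.\ ascending) links force contractibility of sublevel (resp.\ superlevel) sets. This is precisely where the proof of Prop.~2.2.7 of \cite{BradyNonPos} does its real work, specializing the general Bestvina--Brady framework to the LOG setting.
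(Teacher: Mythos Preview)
The paper does not actually prove this theorem; it is quoted with a citation to \cite[Prop.~2.2.7]{BradyNonPos} and used as a black box. So there is no in-paper argument to compare against. Your sketch is essentially the argument from that reference and is correct in outline, with one slip to fix.

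In Part~(2) you write that ``$L_{B_\Gamma}$ misses all vertices of $\widetilde{X}_\Gamma$'' because $h_{X_\Gamma}$ is nonconstant on edges. That inference is wrong: nonconstancy on edges says nothing about vertices, and in fact the vertices at height~$0$ (the elements of $\ker h_{B_\Gamma}$) lie in $L_{B_\Gamma}$ --- the paper even says explicitly that $\ker h_{B_\Gamma}$ acts vertex-transitively on $L_{B_\Gamma}$. The conclusion you want, that $L_{B_\Gamma}$ is $1$-dimensional, is still true, but the correct reason is simply that $\widetilde{X}_\Gamma$ is a $2$-complex and $h_{X_\Gamma}$ is affine and nonconstant on each cell, so each open $2$-cell meets the level set in at most a segment and each open edge in at most a point. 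With that corrected, your suspension argument goes through: contractibility of $\Sigma L_{B_\Gamma}$ gives $\widetilde{H}_*(L_{B_\Gamma})=0$, so $L_{B_\Gamma}$ is a connected acyclic graph, i.e.\ a tree. (An alternative, and perhaps the more common phrasing, is to invoke the Bestvina--Brady Morse lemma directly to get that $L_{B_\Gamma}$ is simply connected from simple connectivity of the ascending and descending links; a simply connected $1$-complex is a tree. Your homological route is a fine variant.)

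Parts~(1) and~(3) are correct as written.
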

In~\cite{BrGuLe}, Brady, Guralnik, and Lee used these groups to construct Stallings-type
examples of groups which are of type $F_2$ but not of type $F_3$ and
which have Dehn functions with prescribed polynomial or exponential
growth rates.

\section{Main Theorem}\label{sec:mainthm}

To understand our construction, first consider the problem of
constructing a space where the homological and homotopical filling
functions differ.  Suppose $W$ is a simply-connected space with large
Dehn function and $\alpha$ is a closed curve in $W$.  In order to
reduce the homological filling area but not the homotopical
filling area of $\alpha$, we could attach a 2-complex $Z$ to $\alpha$, in which
$\alpha$ is the boundary of a 2-chain, but not the boundary of a disk.
If $\pi_1(Z)/\langle \alpha\rangle=0$, the
resulting space is still simply connected.  By attaching copies of $Z$
to infinitely many closed curves, we can obtain a complex which has large
$\delta$ but small $\FA$.

Our construction will be based on a graph of groups with each vertex
labeled by one of two groups, $A$ and $Q$.  The first group, $A$, will
be a right-angled Artin group with a kernel $H_A$ that is $FP_2$ but not
finitely presented.  
This subgroup acts geometrically on a
space which has trivial $H_1$ and non-trivial $\pi_1$, which 
will provide the $Z$'s in the
construction.   

We define a \emph{Thompson complex} to be a connected, finite,
2-dimensional flag complex $Y$ whose fundamental group is a
simple group with an element of infinite order.  (The name comes from
the first known group with these properties, Thompson's group $T$.)  

Let $Y$ be a Thompson complex (for example, a triangulation of a
presentation complex for Thompson's group).  Note that since $\pi_1(Y)$ is simple, $H_1(Y)=0$, 
and every $g\ne 1\in\pi_1(Y)$ normally generates all of $\pi_1(Y)$.
Let $g\in \pi_1(Y)$
be an element of infinite order.  By gluing an annulus to $Y$, we may
assume that there is a path of length 4 in the $1$-skeleton of $Y$
which represents $g$.  We label the vertices of this path $a,u,s,v$, and label the rest
of the vertices of $Y$ by $y_1,\dots,y_d$.  Since $Y$ is flag, the
subcomplex spanned by $a,u,s,v$ must be a cycle of length 4.  
We will consider the RAAG $A_\Lambda$ where $\Lambda$ is the 1-skeleton of $Y$.

As we won't need to refer to $\Lambda$ explicitly, we drop it from the notation and
set $A=A_\Lambda$.  We denote the associated homomorphism by 
$h_A:A\to\Z$, its extension to a Morse function by $h_{X_A}:\widetilde X_A\to\R$,
the level set $h_{X_A}^{-1}(0)$ by $L_A$, etc.
By results of \cite{BestBrad}, the group $H_A=\ker h_A$ is $FP_2$ but not finitely presented.

The second group, $Q$, will be a product of a LOG group and a free group.  Suppose we are given a LOG $\Gamma'$ that satisfies the hypotheses of
Theorem~\ref{thm:LOGGs}.  We may form a new LOG $\Gamma$ by adding an
isolated vertex $s$ to $\Gamma'$, adding a loop connecting a vertex
$t$ of $\Gamma'$ to itself, and labeling the new edge by $s$.  This
corresponds to adding a generator $s$ and a relation $[s,t]=1$ to $B_\Gamma$.
We call a LOG $\Gamma$ obtained this way a \emph{special
  LOG} or SLOG, and the corresponding group a \emph{SLOG group}.  
  Note that $\Gamma$ still satisfies the hypotheses of Theorem~\ref{thm:LOGGs}.  
 
As with $\Lambda$, we will often omit $\Gamma$ from the notation when it
is easily understood.  For instance we will abbreviate $B_\Gamma$ by $B$, 
$X_\Gamma$ by $X_B$, $h_{B_\Gamma}$ by 
$h_B$, etc.

If $B$ is a SLOG group, then by Theorem \ref{thm:LOGGs}, it can be
written as a free-by-cyclic group $B=F_n\rtimes_\phi\Z$.  (The
notation $F_n$ indicates a rank $n$ free group; if we want to
emphasize a particular free basis $\{x_i\}$ we will write
$F_n(x_1,\ldots,x_n)$.)  We define
$\dist_{B}$ to be the distortion of $F_n$ inside $B$; precisely,
$$\dist_{B}(\ell)=\max_g \{ |g|_{F_n} \ : |g|_B\le\ell\},$$
where $|g|$ is the word length of $g$ in the subscripted group.  
In \cite{BrGuLe} there are constructions of SLOG groups $B=F_n\rtimes_\phi\Z$
with $\dist_B\sim e^\ell$, and also with $\dist_B\sim\ell^d$ for
all sufficiently large integers $d$.  The Bieri-Stallings double of $B$,
denoted $D=B \ast_{F_n} B$, has large Dehn function
resulting from this distortion; Bridson and Haefliger showed
\begin{theorem}[\cite{BridHaef}, Thm.\ III.$\Gamma$.6.20]\label{thm:bhLowerDouble}
  If $B$ and $D$ are as above, then 
  $$\dist_B(\ell)\preceq \delta_D(\ell).$$
\end{theorem}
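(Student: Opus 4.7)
The plan is to construct, for each sufficiently large $\ell$, a null-homotopic word in $D$ of length at most $2\ell$ whose homotopical filling area is bounded below by a constant times $\dist_B(\ell)$. Pick $w \in F_n$ realizing the distortion at scale $\ell$, so that $|w|_B \le \ell$ but $|w|_{F_n} = \dist_B(\ell)$. Writing $w^{(1)}$ for an expression of $w$ of length $\le \ell$ in generators of the first copy $B_1$ of $B$, and $w^{(2)}$ for a similar expression in generators of the second copy $B_2$, the loop
$$u := w^{(1)}\,(w^{(2)})^{-1}$$
has length at most $2\ell$ and represents $1$ in $D$, because $w \in F_n = B_1 \cap B_2$ in the amalgamation.

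Next I would choose a finite presentation of $D$ with generating set $S_1 \cup S_2$, where $S_j$ generates $B_j$, arranged so that $S_1 \cap S_2$ is a generating set for $F_n$ (by adjoining $F_n$-generators to both $S_1$ and $S_2$ and identifying them, a Tietze move that does not affect $\delta_D$ up to $\sim$). In any van Kampen diagram $\Delta$ for $u$, each 2-cell is a $B_1$-relator or a $B_2$-relator; let $\Delta_1, \Delta_2 \subseteq \Delta$ be the corresponding closed subcomplexes. Every edge of $\Delta_1 \cap \Delta_2$ carries a label in $S_1 \cap S_2$, i.e., an $F_n$-generator. The $w^{(1)}$-arc of $\partial\Delta$ is adjacent only to $B_1$-cells and the $(w^{(2)})^{-1}$-arc only to $B_2$-cells, so planarity of $\Delta$ forces an embedded arc $\gamma \subset \Delta_1 \cap \Delta_2$ whose endpoints are the two transition vertices of $\partial\Delta$ and which separates the two boundary arcs. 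Reading the boundary of the component of $\Delta_1$ adjacent to the $w^{(1)}$-arc yields a word of the form $w^{(1)}\gamma^{-1}$ that is trivial in $B_1$; since $\gamma$ is spelled entirely in $F_n$-generators and $F_n$ embeds in $B_1$, the combinatorial length of $\gamma$ is at least $|w|_{F_n} = \dist_B(\ell)$.

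Finally, because each edge of $\gamma$ lies on the boundary of at least one 2-cell of $\Delta$, and the maximum perimeter $K$ of any relator in the chosen presentation is finite, the number of 2-cells contributing edges to $\gamma$ is at least $|\gamma|/K$. Hence $\area(\Delta) \geq \dist_B(\ell)/K$, giving $\delta_D(2\ell) \geq \dist_B(\ell)/K$ and therefore $\dist_B \preceq \delta_D$. The main obstacle is the topological step extracting the separating arc $\gamma$: the intersection $\Delta_1 \cap \Delta_2$ may well be a disconnected 1-complex containing both arcs and loops, so one needs a Jordan-curve-type argument exploiting the cyclic order on $\partial\Delta$ (in which the $w^{(1)}$- and $(w^{(2)})^{-1}$-edges appear in two contiguous blocks) to isolate a single component that genuinely separates the two boundary arcs and whose label can be read off as the element $w$ of $F_n$. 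Once this planarity step is handled carefully, the rest of the argument is bookkeeping.
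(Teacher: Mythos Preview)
The paper does not prove this theorem; it is quoted from Bridson--Haefliger with no argument given. The only glimpse of the proof appears later, in the remark following Lemma~\ref{lem:lowerDoubleSize}, where the relevant curve from \cite{BridHaef} is identified as $w_n^{-1}w_n'$ --- exactly your $w^{(1)}(w^{(2)})^{-1}$. So your outline is a reconstruction of the cited argument rather than an alternative, and it is correct in substance.

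One caveat on the step you already flagged. With the generating set you chose ($S_1\cap S_2$ a generating set for $F_n$), the assertion that ``the $w^{(1)}$-arc of $\partial\Delta$ is adjacent only to $B_1$-cells'' can fail: edges of $w^{(1)}$ carrying $F_n$-letters may border $B_2$-cells, so the two-colouring of $\Delta$ need not cleanly separate the two boundary arcs. The standard remedy is either to take $S_1$ and $S_2$ disjoint and add length-two relators $x_i^{(1)}(x_i^{(2)})^{-1}$ (these relators then form corridors, and the corridor structure hands you the separating $F_n$-arc directly), or equivalently to use the Bass--Serre tree for $D=B_1*_{F_n}B_2$: pulling back the edge-midpoint under the natural map from $\Delta$ to the tree yields a properly embedded $1$-submanifold of $\Delta$ whose components carry $F_n$-words, and planarity forces one such arc to join the two transition vertices. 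Reading that arc gives a word in $F_n$ equal to $w$, hence of length at least $\dist_B(\ell)$, and your edge-count $\area(\Delta)\ge |\gamma|/K$ finishes the proof.
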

The group $D$ will serve as $W$ in our construction; since its Dehn function is large, it has many curves which are difficult to fill by disks.  
By an embedding trick appearing in \cite{BaumslagBridsonMillerShort}, $D$ 
can be viewed as a subgroup of the product $Q:=B\times F_2$, and in fact
we will see that $D$ is the kernel of a map $h_Q:Q\to\Z$.

We will construct a finitely presented CAT(0) group $G$ as a graph product of
$Q$ with several copies of $A$.  The subgroup $H$ will be the kernel of a map 
$h:G\to\Z$, and $H$ will have the structure of a graph product of copies of $D$
and $H_A$.
We will show that attaching $H_A$ to $D$ does not affect $\delta$, but that the 
copies of $Y$ that lie in $L_A$ can be used to replace fillings by disks with more 
efficient fillings by chains.

\begin{theorem}\label{thm:mainThm}
Let $A$ be a RAAG based on a Thompson complex as described above,
and let $B=F_n\rtimes_\phi\Z$ be a SLOG group.  Then there exists a finitely presented CAT(0) group 
$G$ containing $A$ and $B$ such that the
homomorphisms $h_A:A\to\Z$ and $h_B:B\to\Z$ extend to 
$h:G\to\Z$ and such that $H=\ker(h)$ is finitely presented and satisfies:
\begin{align*}
\ \FA_H (\ell) &\ \preceq \ \ell^5 \\
\delta_H(\ell)&\ \succeq\ \dist_B(\ell).
\end{align*}
\end{theorem}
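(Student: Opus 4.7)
The plan proceeds in three stages.

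\emph{Construction of $G$.}  I would take $G$ to be a graph of groups amalgamating $Q = B \times F_2$ with several copies of $A$ along abelian edge groups.  On the $A$-side, each edge group is the $\Z^2$-subgroup spanned by the square of the Salvetti complex corresponding to the $4$-cycle $\{a,u,s,v\}\subset \Lambda$; on the $Q$-side it is identified with a $\Z^2$-subgroup of $B \times F_2$ involving the factor $\langle t\rangle \subset B$ and one of the free generators of $F_2$.  The gluings are chosen so that $h_A$ and $h_B$ extend consistently to a single homomorphism $h:G\to\Z$, and so that the vertex groups (all CAT(0)) and edge groups (abelian, isometrically embedded) produce a CAT(0) total space by the Bridson--Haefliger gluing theorem.

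\emph{Finite presentation of $H = \ker h$.}  The level set $L = h^{-1}(0)$ inherits a graph-of-spaces structure whose pieces are copies of $L_A$ and of $\ker(h|_Q) = D = B *_{F_n} B$, so $H$ splits as a graph of groups with vertex groups copies of $D$ and of $H_A = \ker h_A$.  The group $D$ is finitely presented (a finite amalgam of finitely presented groups over the finitely generated group $F_n$), but $H_A$ is not.  The crucial point, however, is that by Theorem~\ref{thm:BestBrad} the only obstruction to finite presentation of $H_A$ comes from the $\pi_1$'s of the scaled copies of $Y$ inside $L_A$, and the gluings are designed so that the loop $au^{-1}sv^{-1}$ in $L_A$ (which represents the infinite-order element $g\in\pi_1(Y)$) becomes nullhomotopic in $H$ after amalgamation with $Q$.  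Because $\pi_1(Y)$ is simple and $g$ has infinite order, the normal closure of $g$ is all of $\pi_1(Y)$, so trivializing $g$ trivializes $\pi_1$ in every copy of $Y$.  Combined with the finite presentations of $D$ and of finitely many orbit representatives, this yields a finite presentation of $H$.

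\emph{The filling bounds.}  The lower bound is immediate: $D$ is a vertex group in the graph-of-groups decomposition of $H$, hence quasi-isometrically embedded, so $\delta_H \succeq \delta_D \succeq \dist_B$ by Theorem~\ref{thm:bhLowerDouble}.  The main obstacle is the upper bound $\FA_H(\ell)\preceq \ell^5$.  My strategy is to first fill a given $1$-cycle $\alpha$ of mass $\ell$ in $L$ by a disk $\beta$ in $\widetilde X_G$ of area $O(\ell^2)$, using that $G$ is CAT(0); this disk typically leaves $L$.  I would then apply the Bestvina--Brady Morse function: slicing $\beta$ at integer heights produces $1$-cycles, and the portions of each slice in the $A$-parts lie in scaled copies of $Y$.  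Since $H_1(Y) = 0$, each such slice bounds a $2$-chain within its scaled copy of $Y$; splicing these chain fillings together replaces $\beta$ with an integral $2$-chain supported in $L$.  Tallying the masses, using that a scaled copy of $Y$ at scale $k$ fills a $1$-cycle of length $O(k)$ with polynomial mass, and summing over the $O(\ell^2)$ cells of $\beta$ spread across $O(\ell)$ heights, yields the $\ell^5$ bound.  The hard part, which I expect will absorb most of the paper, is doing this slice-by-slice chain surgery coherently across the graph-of-groups pieces while maintaining integrality and controlling the total mass.
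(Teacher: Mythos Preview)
Your proposal has two genuine gaps, one in the construction and one in the lower bound.

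\textbf{The edge groups.}  A $4$-cycle $a,u,s,v$ in the defining graph $\Lambda$ does \emph{not} span a $\Z^2$ in the RAAG: since $a,s$ are non-adjacent and $u,v$ are non-adjacent, the subgroup they generate is $F_2(a,s)\times F_2(u,v)$.  The paper's edge groups are exactly this $E=F_2\times F_2$, identified on the $Q$-side with $F_2(a_i,s)\times F_2(u,v)\subset B\times F_2(u,v)$ (note: $a_i$ and $s$, not $t$).  This is not a cosmetic difference.  The finite-presentation argument works by showing that the ascending link of $X_G$ is simply connected; this link is built by gluing copies of $Y=\alk(X_A)$ to the suspension-of-a-tree $\alk(X_Q)$ along the \emph{full square} $S_i$ spanned by $a_i^+,u^+,s^+,v^+$.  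It is precisely because $\pi_1(S_i)\to\pi_1(Y)$ hits the infinite-order element $g$ that Seifert--van~Kampen kills $\pi_1(Y)$.  With a $\Z^2$ edge group you would glue along a single edge of the link, contributing nothing to $\pi_1$, and the ascending link would still have $\pi_1\cong \pi_1(Y)^{*(n-1)}$; $H$ would not be finitely presented.

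\textbf{The lower bound.}  Your sentence ``$D$ is a vertex group\ldots hence quasi-isometrically embedded, so $\delta_H\succeq\delta_D$'' is the step that fails.  Quasi-isometric embedding of a subgroup does \emph{not} imply a Dehn-function inequality: a hard curve in $D$ could in principle bound a small disk that leaves $L_Q$ and uses the attached $L_A$ pieces.  Ruling this out is the main content of the paper's lower-bound section.  The argument there takes an arbitrary disk $\tau:D^2\to L$ filling the Bridson--Haefliger word $\gamma$, cuts it along preimages of the separating $L_E$-slices, and shows (Lemma~\ref{lem:LaPuncDiscs}) that for any punctured sub-disk mapping into a copy of $L_A$, the outer boundary cycle is a real linear combination of those inner boundary cycles that land in the \emph{same} copy of $L_E$.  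This uses crucially that $g\in\pi_1(Y)$ has infinite order, so that $\lozenge\hookrightarrow Y$ is $\pi_1$-injective and $Z_1(\lozenge)\cong\R$.  An induction over the tree of pieces then produces a $2$-chain in $L_0\cap\tau(D^2)$ with boundary $[\gamma]$; since $L_0$ is $2$-dimensional and contractible, this chain must equal the original hard filling $[\beta]$, forcing $\area\tau\ge\area\beta$.  None of this is automatic from the graph-of-groups structure.

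Your upper-bound sketch is closer to the paper's argument for the $L_A$ pieces (CAT(0) filling plus a pushing map, using $H_1(Y)=0$), though the paper handles loops in $L_Q$ by an explicit word-rewriting that reduces to a CAT(0) free-by-cyclic group rather than by pushing, and then combines the two cases via an innermost-piece induction over the tree.
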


Using the examples of SLOG groups constructed in \cite{BrGuLe}, this implies
Theorem \ref{thm:THM}.

\section{Constructing $G$ and $H$}

In this section we construct the groups $G$ and $H$ of
Theorem~\ref{thm:mainThm}.  
The construction is similar to the perturbed
RAAGs in \cite{BrGuLe}, but we glue several RAAGs (rather than just one)
to a free-by-cyclic group.  

Throughout this paper, if $g$ is a group element, $\bar{g}$ will represent its inverse.

\subsection{The SLOG piece}
Let $B=B_\Gamma=F_n\rtimes_\phi\Z$ be as in Theorem~\ref{thm:mainThm}.
The first step of the construction is to use
$B$ to construct a group $D\triangleleft Q\cong B\times F_2$ with large Dehn function.
The group $Q$ will contain several copies of the group 
$F_2\times F_2$, and we will attach 
RAAGs $A_i$ to $Q$ along some of these groups. 
The result
of this gluing will be $G$.

Since $\Gamma$ is a SLOG, it contains an isolated vertex $s$ which is the label of a 
single loop in $\Gamma$.  Call the vertex of that loop $t$.  
Call the rest of the vertices $\{a_1,\ldots,a_{n-1}\}$.  We have 
two presentations for $B$, namely the SLOG presentation with 
generating set $\{s,t,a_i\}$, and a free-by-cyclic presentation.  For the latter 
we may take $\{x_i,t \mid 1\le i \le n\}$ as a generating set, where 
$x_i=a_i\bar t$ for $1\le i < n$ and $x_n=s\bar t$.  (See Fig.~\ref{fig:morse}.)
Thus $B=F_n(x_1,\dots,x_n)\rtimes_\phi \Z$.

Let $D$ be the double $D=B \ast_{F_n} B$ where the $F_n$ is generated
by the $x_i$.  Theorem~\ref{thm:bhLowerDouble} implies that $D$ has
Dehn function at least as large as $\dist_B$.  By
\cite{BaumslagBridsonMillerShort}, $D$ is isomorphic to the subgroup
$$D\cong F_n(x_1,\dots,x_n) \rtimes_{(\phi,\phi)} F_2(t\bar u,t\bar v)$$
of the group $Q=B \times F(u,v)$.  Furthermore, if
$h_Q:Q\to\Z$ is the group homomorphism taking the elements
$a_i,s,t,u,v$ to $1\in\Z$, then the kernel of $h_Q$ is precisely $D$.

Since $\Gamma$ is a SLOG, the group $B=B_\Gamma$
contains many copies of $F_2$.  Recall that the presentation 2-complex $X_B$ of
$B$ is a locally CAT(0) 2-dimensional cube complex and thus a
$K(B,1)$.  For any $i$, consider the subgroup of $B$
generated by $a_i$ and $s$.  
It is easy to
check that any two of the vertices $a_i^{\pm}$ and $s^{\pm}$ 
in the link of $X_{B}$ are
separated by distance at least 2 in $\Lk(X_B)$.  Consequently,
the Cayley graph of the subgroup generated by $a_i$ and $s$ is convexly embedded in 
$\widetilde{X}_{B}$.  It is thus a copy of $F_2$.

Define $X_Q=X_{B}\times R_2$, where $R_2$ is a
wedge of two circles, so that $X_Q$ is a $K(Q,1)$.  This is locally
CAT(0), and by the argument above, for all $i$, the subgroup generated by
$a_i,s,u,v$ is a convexly embedded copy of $F_2\times F_2$.  

\subsection{Attaching the RAAG pieces}
Let $A=A_\Lambda$ be a RAAG constructed from a Thompson complex $Y$ as in 
Sec.~\ref{sec:mainthm}.  Thus $g\in\pi_1(Y)$ has infinite order
and is represented by a path $ausv$ in the defining graph $\Lambda$ for $A$.
Let $X_A$ be
the Salvetti complex of $A$.  Since $a,u,s$, and $v$
span a square in $Y$, $A$ has a convex subgroup isomorphic to $F_2\times F_2$,
generated by $a,u,s,v$.  Let $E:=F_2\times F_2$.

We form the group $G$ by gluing copies of $A$ to $Q$ along copies of $E$.
Specifically, consider a graph of groups with vertex groups $Q,A_1,\dots,A_{n-1}$, where $A_i=A$, 
and with each vertex $A_i$ connected to $Q$ to by an edge.  As noted above, for each $i$, the elements $a_i,s,u,v\in Q$ generate a copy of $E$; denote this copy by $E_i$.  We identify $E_i$ with the copy of $E$ in $A_i$ by $a_i\leftrightarrow a$,
$u\leftrightarrow u$, $s\leftrightarrow s$, $v\leftrightarrow v$.  Let
$G$ be the fundamental group of this graph of groups.  
This is a group generated by 
$$\{a_{1},\dots,a_{n-1},s,t,u,v,y^i_j\} \quad
i=1,\dots,n-1,\ j=1,\dots,d.$$  
We can define subgroups $Q$, $E_i$, and $A_i$, where $E_i\cong
E$, $A_i\cong A$, and 
$$Q=B_\Gamma\times F_2(u,v)=\langle
a_{1},\dots,a_{n-1},s,t,u,v\rangle,$$
$$A_i = \langle a_i,s,u,v,y^i_{1},\dots, y^i_{d}\rangle,$$
$$E_i=A_i\cap Q=F_2(a_i,s)\times F_2(u,v).$$
The
homomorphisms $h_Q:Q\to \Z$ and $h_A:A\to \Z$ agree on the edge
groups, so we can extend them to a function $h:G\to \Z$.

Let $H=\ker h$.

\section{Finite presentability}\label{sec:topology}

In this section, we will construct a space on which $H$ acts and
consider its topology.  This will let us prove that $H$ is finitely
presented and will help us bound the Dehn
functions of $H$.  

We can realize the above construction of $G$ geometrically to construct a
$K(G,1)$ as follows.  Let $X_E:=R_2\times R_2$, where $R_2$ is the wedge of two
circles; this is a $K(E,1)$, and each edge group corresponds to copies of $X_E$ in $X_A$ and $X_Q$.  Each of these copies of $X_E$ is convex, so we can
glue $n-1$ copies of $X_A$ to $X_Q$ along the $X_E$'s to obtain a
locally CAT(0) cube complex which we call $X_G$.  This is a $K(G,1)$,
and the 1-skeleton of its universal cover $\widetilde{X}_G$ is a Cayley graph of $G$.
In particular

\begin{lemma}
The group $G$ is CAT(0).
\end{lemma}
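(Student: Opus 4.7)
The plan is to verify that the cube complex $X_G$ constructed immediately before the lemma is locally CAT(0); then its universal cover $\widetilde X_G$ is CAT(0), and since $G$ acts geometrically on $\widetilde X_G$ by deck transformations, $G$ is a CAT(0) group.

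First I would show that the vertex-group pieces $X_A$ and $X_Q$ are themselves locally CAT(0) cube complexes. For the Salvetti complex $X_A$ of the RAAG $A=A_\Lambda$ this is standard (the link of its unique vertex is a flag complex, namely the Thompson complex $Y$ together with its antipodal copy joined in the RAAG fashion). For $X_Q=X_B\times R_2$, Theorem~\ref{thm:LOGGs} guarantees that $X_B$ is locally CAT(0), and a product of locally CAT(0) complexes with a metric graph is locally CAT(0).

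Next I would check that each edge-complex $X_E=R_2\times R_2$ is locally convex in both $X_A$ and $X_Q$. The paper has essentially already done this: the subgroup $E_i=F_2(a_i,s)\times F_2(u,v)$ is explicitly shown to be convexly embedded in $Q$ (by the link-distance argument showing $\langle a_i,s\rangle\cong F_2$ is convex in $B$, then taking the product with $F(u,v)$), and in $A_i=A$ the subcomplex spanned by $a,u,s,v$ is a 4-cycle in the Thompson complex $Y$, so the corresponding sub-Salvetti complex is a locally convex $F_2\times F_2$. Thus the universal covers of these $X_E$ embed as convex subcomplexes of $\widetilde X_A$ and $\widetilde X_Q$.

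Then I would invoke the gluing theorem of Bridson--Haefliger (II.11.1 in \cite{BridHaef}, or equivalently, verify Gromov's link condition at the gluing locus directly: at any vertex of $X_G$ where pieces meet, the link decomposes as a union of the links in $X_A$ and $X_Q$ along the link in the common $X_E$, and since the latter inclusion is a full subcomplex inclusion of flag complexes into flag complexes, the union is again flag). Applied iteratively as we glue on the $n-1$ copies of $X_A$ to $X_Q$, this gives that $X_G$ is locally CAT(0). The main technical obstacle is simply bookkeeping at the gluing links to confirm the full-subcomplex/flag condition, but this is straightforward because the embedding of $X_E$ into each piece is by a subset of generators whose relations in the bigger group are exactly those already present in $E$. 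Hence $\widetilde X_G$ is CAT(0), and $G=\pi_1(X_G)$ acts geometrically on it, proving $G$ is CAT(0).
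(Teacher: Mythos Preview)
Your proposal is correct and follows the same approach as the paper. In fact the paper treats this lemma as an immediate consequence of the preceding sentence (``Each of these copies of $X_E$ is convex, so we can glue $n-1$ copies of $X_A$ to $X_Q$ along the $X_E$'s to obtain a locally CAT(0) cube complex which we call $X_G$''), giving no separate proof; you have simply spelled out the verification that the pieces are locally CAT(0), that the edge complexes are locally convex in each piece, and that the gluing theorem applies.
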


Now, $H$ is the kernel of the homomorphism $h:G\to \Z$.  As before,
the vertices of $\widetilde X_G$ are in correspondence with the elements of $G$, 
so by viewing $h$ as a function on the vertices of $\widetilde X_G$, we may
extend $h$ linearly over cubes to obtain a Morse function $h:\widetilde X_G\to\R$.
Let $L_G=h^{-1}(0)$.  

Since $h$ cuts cubes of $L_G$
``diagonally'', $L_G$ is a polyhedral 2-complex whose cells are slices of
cubes.  The subgroup $H$ acts freely on $L_G$, and
since the vertices of $L_G$ are in $1$-to-$1$ correspondence with the
elements of $H$, the action is cocompact and thus geometric.  We will show:
\begin{lemma}
  $L_G$ is simply-connected and thus $H=\ker(h)\subset G$ is finitely
  presented.
\end{lemma}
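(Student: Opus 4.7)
The plan is to apply Bestvina--Brady Morse theory to the function $h:\widetilde X_G\to\R$. Since $\widetilde X_G$ is CAT(0), and therefore contractible, it suffices to show that the ascending link $\alk(\widetilde X_G)$ and the descending link $\dlk(\widetilde X_G)$ at the single vertex orbit are each connected and simply connected; simple connectedness of $L_G$ then follows by the same Morse-theoretic argument used by Bestvina and Brady in \cite{BestBrad}. Once $L_G$ is simply connected, finite presentability of $H$ is immediate since $H$ acts freely (as $G$ is torsion-free CAT(0)) and cocompactly (the vertices of $L_G$ are in bijection with the elements of $H$) on $L_G$.

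To compute $\alk(\widetilde X_G)$, I would use the graph-of-groups decomposition. Because $X_G$ is built by gluing $X_Q$ to each $X_{A_i}$ along $X_{E_i}$, the ascending link decomposes as
\[\alk(\widetilde X_G)=\alk(\widetilde X_Q)\cup\bigcup_i\alk(\widetilde X_{A_i}),\]
where the pieces intersect in the 4-cycles $\alk(\widetilde X_{E_i})$. Since $\Gamma$ is a SLOG, Theorem~\ref{thm:LOGGs} gives that $\alk(\widetilde X_B)$ is a tree, so $\alk(\widetilde X_Q)=\alk(\widetilde X_B)*\{u^+,v^+\}$ is the join of a contractible space with two points, hence itself contractible. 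By Theorem~\ref{thm:BestBrad}, $\alk(\widetilde X_{A_i})\cong Y$, and $\alk(\widetilde X_{E_i})$ is the 4-cycle $C_i$ with vertices $a_i^+,u^+,s^+,v^+$. This $C_i$ embeds in $Y$ precisely as the loop representing the infinite-order element $g\in\pi_1(Y)$, whereas inside the contractible complex $\alk(\widetilde X_Q)$ it is null-homotopic, its four edges being join edges between vertices of the tree $\alk(\widetilde X_B)$ and the two points $u^+,v^+$.

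The key computation is a van Kampen argument. Attaching $Y\cong\alk(\widetilde X_{A_i})$ to a simply connected space along $C_i$ produces a space with fundamental group $\pi_1(Y)/\langle\langle g\rangle\rangle$. Because $Y$ is a Thompson complex, $\pi_1(Y)$ is simple and $g\ne 1$, so $g$ normally generates $\pi_1(Y)$ and this quotient is trivial. Iterating over $i=1,\dots,n-1$ preserves simple connectedness, because each subsequent $C_i$ lies in the contractible subspace $\alk(\widetilde X_Q)$ and is therefore null-homotopic in the previously built subcomplex. Connectedness is automatic since $\alk(\widetilde X_Q)$ is connected and each $\alk(\widetilde X_{A_i})$ meets it in the nonempty 4-cycle $C_i$. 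The descending link $\dlk(\widetilde X_G)$ is handled by an identical argument.

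The main obstacle is isolating precisely where each hypothesis on $Y$ is used: the \emph{simplicity} of $\pi_1(Y)$ is what allows a single conjugacy class to kill the entire fundamental group in one step, and the \emph{infinite order} of $g$ is what guarantees $g\ne 1$ in $\pi_1(Y)$; without either of these the van Kampen quotient could remain nontrivial and $L_G$ might fail to be simply connected. The remaining ingredients---the CAT(0) gluing of the ambient space along convex subcomplexes, the join formula for links of products, and the Morse-theoretic passage from simple connectedness of ascending and descending links to simple connectedness of the level set---are all standard.
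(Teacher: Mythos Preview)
Your proposal is correct and follows essentially the same approach as the paper: both apply Bestvina--Brady Morse theory to reduce to simple connectedness of the ascending and descending links, decompose those links as $\alk(\widetilde X_Q)$ glued to copies of $Y$ along the 4-cycles $C_i$, observe that $\alk(\widetilde X_Q)$ is the suspension of a tree (hence simply connected), and then use Seifert--van~Kampen together with the fact that $g$ normally generates $\pi_1(Y)$ to conclude. The only cosmetic difference is that you phrase the suspension as a join with two points and note contractibility rather than mere simple connectedness, and you are slightly more explicit about why the iterated gluing works.
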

\begin{proof}
  Since $\widetilde{X}_G$ is contractible and $h$ is a Morse function on
  $\widetilde X_G$, Theorem~4.1 of \cite{BestBrad} implies
  that it is enough to show that the ascending and descending links of
  any vertex in $\widetilde{X}_G$ are simply connected.  Since $X_G$ has
  only one vertex, it is enough to show this for that vertex.  

  Since the 1-skeleton of $\widetilde{X}_G$ is the Cayley graph of $G$, we
  can label the vertices of $\Lk(X_G)$ by $g^\pm$, where $g$ ranges
  over the generating set $S$.  The link $\Lk(X_G)$ of the vertex of
  $X_G$ is obtained by gluing $\Lk(X_Q)$ and the various $\Lk(X_{A_i})$.
  For each $1\le i \le n-1$, the
  links $\Lk(X_Q)$ and $\Lk(X_{A_i})$ each contain a subcomplex with
  vertices $a^\pm_i$, $u^\pm$, $s^\pm$, and $v^{\pm}$, and gluing the
  links along these subcomplexes gives $\Lk(X_G)$.

  Likewise, we can form $\alk(X_G)$ by gluing $\alk(X_{A_i})$, $1\le i\le
  n-1$, to $\alk(X_Q)$ along subcomplexes $S_i$ spanned by
  $a^+_i,u^+,s^+,$ and $v^+$.  We claim that $\alk(X_G)$ is
  simply-connected.  Since $\alk(X_{B_\Gamma})$ is a tree by
  hypothesis, $\alk(X_{Q})$ is the suspension of a tree (with
  suspension points $u^+$ and $v^+$), and thus it is
  simply-connected.  Since $A_i$ is a right-angled Artin group with
  defining complex $Y$, each $\alk(X_{A_i})$ is isomorphic to $Y$, and
  each $S_i$ is a square such that the normal closure of $\pi_1(S_i)$
  in $\pi_1(\alk(X_{A_i}))=\pi_1(Y)$ is all of $\pi_1(Y)$.  By the
  Seifert-van~Kampen theorem,
  $$\pi_1(\alk(X_{G}))=(\pi_1(\alk(X_1))\ast \dots\ast
  \pi_1(\alk(X_{n-1})))/\langle
  \pi_1(S_1),\dots,\pi_1(S_{n-1})\rangle=0,$$ so the ascending link is
  simply-connected.  The same argument with $+$'s changed to $-$'s
  shows that the descending link is also simply connected, so $L_G$ is
  simply connected.  Thus $H=\pi_1(L_G/H)$ and $H$ is finitely presented.
\end{proof}

For an alternate description of $H$, 
recall that the group $G$ is the fundamental group of a graph of groups, with one
vertex labeled $Q$ connected to $n-1$ vertices labeled $A_i$ by edges
labeled $E_i$.  Since $H\subset G$, $G$ induces a graph of groups
structure on $H$.  Indeed, $G$ acts on a tree $T$ whose vertices
correspond to the cosets of $Q$ and $A_i$, whose edges correspond to
cosets of $E_i$, and whose quotient $G\backslash T$ is a star with $n-1$
edges.  We can restrict the action of $G$ on $T$ to an action of
$H$, and since any coset of $Q$, $A_i$, or $E_i$ has nontrivial
intersection with $H$, the orbit of any vertex or edge under $H$ is
the same as its orbit under $G$.  Therefore $H$ acts on $T$ with
vertex stabilizers conjugate to $H_Q:=H\cap Q$ and $H_{A_i}:=H\cap
A_i$, edge stabilizers conjugate to $H_{E_i}:=H\cap {E_i}$, and
quotient $H\backslash T=G\backslash T$.  This shows that $H$ is the
fundamental group of the graph of groups with central vertex labeled
$H_Q$ connected to $n-1$ vertices labeled $H_{A_i}$ by edges labeled
$H_{E_i}$.

The level set $L_G:=h^{-1}(0)\subset \widetilde{X}_G$, however, is
\emph{not} the universal cover of a corresponding graph of spaces.  To
describe $L_G$, we define $L_Q:=L_G\cap \widetilde{X}_Q$,
$L_{A_i}:=L_G \cap \widetilde{X}_{A_i}$, $L_{E_i}:=L_G\cap
\widetilde{X}_{E_i}$.  These level sets have geometric actions by
$H_Q$, $H_{A_i}$, and $H_{E_i}$ respectively.  We can write the
quotient $L_G/H$ as $L_Q/H_Q$ with the $L_{A_i}/H_{A_i}$ attached
along copies of $L_{E_i}/H_{E_i}$, but since $L_{A_i}$ and $L_{E_i}$
are not simply-connected, $\pi_1(L_{A_i}/H_{A_i})\ne A_i$ and
$\pi_1(L_{E_i}/H_{E_i})\ne E_i$; this is a graph of spaces for a
different graph of groups.

The fact that $L_{A_i}$ and $L_{E_i}$ are not simply-connected will be
important in the rest of the paper, so we will go into some more
detail.  All of the $L_{A_i}$'s and all of the $L_{E_i}$'s are isometric, so
when $i$ is unimportant, we will denote them by $L_A$ and $L_E$.  To
understand the topology of $L_A$ and $L_E$, consider them as subsets
of $\widetilde X_A$.  By \cite{BestBrad}, $L_A$ is a union of scaled
copies of $Y$, indexed by vertices in $\widetilde X_A\setminus L_A$.
Likewise, $L_E$ is composed of scaled copies of a square, which we
denote $\lozenge$, indexed by vertices in $\widetilde X_E\setminus
L_E$.  Translating $L_E$ by elements of $H_A$ gives infinitely many
disjoint copies of $L_E$ inside $L_A$.

By Theorem~8.6 of \cite{BestBrad}, $L_A$ is homotopy equivalent to an
infinite wedge sum of copies of $Y$ and $L_E$ is homotopy equivalent
to an infinite wedge sum of copies of $\lozenge$, so $L_A$ and $L_E$
have infinitely generated $\pi_1$.  Generators of $\pi_1(L_A)$ can
be filled in two ways.  First, each generator can be freely homotoped
into some copy of $L_E$.  Each copy of $L_E$ is contained in some
$L_Q$, and since $L_Q$ is simply connected, each generator of
$\pi_1(L_A)$ is filled by a disk in one of the copies of $L_Q$.
Second, though $\pi_1(L_A)$ is infinitely generated, $H_1(L_A)$ is
trivial, so any curve in $L_A$ can be filled by some 2-chain entirely
inside $L_A$.  Our goal in the rest of this paper is to use these two
types of fillings to show that the homological and homotopical Dehn
functions of $L_G$ are different.

\section{Upper bound on the homological Dehn function}

In this section we prove the following.
\begin{proposition}\label{logupper}
With $H$ as above, we have $\FA_H(\ell)\preceq \ell^5$.
\end{proposition}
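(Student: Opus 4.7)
The plan is to use the graph-of-groups structure of $H$ (described at the end of Section~\ref{sec:topology}) to decompose $\alpha$ into pieces supported in individual vertex spaces, and fill each piece using the specific structure of $L_Q$ and the $L_{A_i}$. Given a 1-cycle $\alpha$ of mass at most $\ell$, view $\alpha$ as a 1-chain on the 1-skeleton of $L_G$, which is the Cayley graph of $H$. Using the $H$-action on the Bass-Serre tree $T$, I split $\alpha$ at each crossing of an edge of $T$ into arcs in individual translates of vertex-level sets, and close each arc into a cycle by adjoining a short bridge inside the corresponding translate of $L_{E_i}$. Since each $E_i$ is convexly embedded in both $Q$ and $A_i$, the bridge lengths (and hence the masses of the resulting pieces) are polynomially bounded in $\ell$. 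This writes $\alpha = \alpha_Q + \sum_i \alpha_{A_i}$, with each summand supported in a translate of a single vertex-level set.

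For the cycles $\alpha_{A_i}$ in translates of $L_A$, I apply Theorem~\ref{thm:BestBrad}: $L_A$ is a union of scaled copies of the Thompson complex $Y$, and $H_1(Y) = 0$ because $\pi_1(Y)$ is simple. Each $\alpha_{A_i}$ therefore decomposes into 1-cycles living in individual scaled copies of $Y$, and each of these bounds a 2-chain in its copy of $Y$ whose mass is at most the scale of the copy times the length of the boundary. The relevant scales are $O(\ell)$, so the total $L_A$-filling has mass $\preceq \ell^2$.

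For the cycle $\alpha_Q$ in $L_Q$, one needs a similar polynomial bound on $\FA_{L_Q}$. Although $L_Q$ is contractible (by Bestvina-Brady applied to $X_Q$: the ascending and descending links are suspensions of trees, hence contractible) and $H_Q = D$ acts on it geometrically, its homotopical Dehn function equals $\delta_D$, which can be exponential. The trick is that the homological Dehn function of $L_Q$ is much smaller, which I would establish by exploiting the product structure $Q = B \times F_2$ together with the fact that level sets of $h_B$ in $\widetilde X_B$ are trees (Theorem~\ref{thm:LOGGs}); this tree-like structure lets cycles in $L_Q$ be filled by 2-chains of polynomial mass even when the corresponding disk fillings are exponentially large. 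Combining all bounds yields $\FA_H(\ell) \preceq \ell^5$. The main obstacle is precisely the polynomial $\FA_{L_Q}$ bound: one must quantitatively exploit the tree-like structure of $L_Q$ to replace costly disk fillings of cycles making deep excursions into the distorted subgroup $F_n \le B$ by cheaper chain fillings, and this is the central gap between homotopical and homological filling functions that the paper aims to exhibit.
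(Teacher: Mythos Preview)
Your outline has the right overall architecture---decompose via the Bass--Serre tree, then fill separately in the vertex pieces---and this matches the paper. But the heart of the argument, the filling of loops in $L_Q$, is not just incomplete in your proposal; your suggested approach is pointed in the wrong direction.

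You propose to bound $\FA_{L_Q}$ itself polynomially, using only the internal structure of $L_Q$. But $H_Q = D$ acts geometrically on $L_Q$, so $\FA_{L_Q} \sim \FA_D$, and there is no reason to expect $\FA_D$ to be polynomial: if the double $D$ already had small homological Dehn function, the entire construction of $H$ would be unnecessary. The paper does \emph{not} fill $L_Q$-loops inside $L_Q$. Instead, given a loop in $L_Q$ labeled by a word $w$ in the generators of $D = F_n \rtimes_{(\phi,\phi)} F_2(t\bar u, t\bar v)$, it reduces $w$ to $\vartheta(w)$, where $\vartheta$ collapses $t\bar v \mapsto t\bar u$; the image lies in a CAT(0) free-by-cyclic group and is filled quadratically. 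The reduction is done one conjugator-letter at a time, and each elementary step produces short auxiliary cycles of the form $(a_i\bar v)^d s_2^{-d} s_1^d (a_i\bar u)^{-d}$ and similar. These cycles are precisely generators of $\pi_1(L_A)$: they bound scaled copies of $Y$ inside the attached $L_{A_i}$, and since $H_1(Y)=0$ they are filled \emph{homologically} there with quadratic mass. So the polynomial homological filling of $L_Q$-loops genuinely requires leaving $L_Q$ and using the Thompson-complex pieces---this is exactly the mechanism that separates $\FA_H$ from $\delta_H$, and it cannot be recovered from the tree structure of $L_B$ alone.

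A secondary issue: your $L_A$ bound of $\ell^2$ is too optimistic, and the claim that a cycle in $L_A$ ``decomposes into 1-cycles living in individual scaled copies of $Y$'' is not immediate. The paper first fills $\alpha$ by a quadratic CAT(0) chain $\beta$ in $\widetilde X_A$ (supported in $h_A^{-1}[-c\ell,c\ell]$), then applies a pushing retraction onto $L_A$ whose Lipschitz constant grows linearly with height; this yields $\ell^4$, not $\ell^2$.
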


\begin{proof} We show that any $1$-cycle in $L_G$ of mass at most $\ell$ can be filled 
by a $2$-chain of mass $\preceq \ell^5$.  Since $\ell^5$ is a super-additive function, it is enough to prove this for loops in $L_G^{(1)}$.  

 $L_G$ consists of copies of $L_A$ and $L_Q$ glued together along copies of $L_E$.  We first show how to homologically fill loops that lie in a single copy of $L_A$ or $L_Q$, and then use these fillings to fill arbitrary loops.  
Note that each 1-cell of $L_G$ is a diagonal of some square in
$\widetilde{X}_G$, so each 1-cell corresponds to a product $x\bar y$ where $x$ and $y$ are (certain) generators of $G$.

Consider a loop $\alpha$ of length $\ell$ that lies in a copy of $L_{A}$.  Recall that $L_{ A}$ is a level set of the Morse function $h_A:\widetilde{X}_A \to \R$.  Since $\widetilde{X}_A$ is CAT(0), there exists a 2-chain $\beta$ with boundary equal to $\alpha$ and 
mass $\preceq \ell^2$. Further, $\beta$ lies in $h_A^{-1}[-c\ell, c\ell]$ for some $c>0$ (\cite{Wenger08}, cf. Prop 2.2 in \cite{ABDDY}).   
We will use $\beta$ to produce a filling of $\alpha$ in $L_A$ using a pushing map as in~\cite{ABDDY}. 

Let $Z$ be the space obtained by deleting open neighborhoods of the vertices of $X_A$ outside $L_A$, with the induced cell structure.  So $Z = X_A \setminus \cup_{v \notin L_A} B_{1/4}^{\circ}(v)$.  
By Theorem~4.2 in \cite{ABDDY}, there is a $H_A$-equivariant Lipschitz retraction (pushing map) $\mathcal{Q}:Z \to L_A$ such that the Lipschitz constant grows linearly with distance from $L_A$.  Furthermore, if $S_v=\partial B_{1/4}^{\circ}(v)$ is the boundary of one of the deleted neighborhoods, the image of $S_v$ is a copy of $Y$ with metric scaled by a factor of $h_A(v)$.  In particular, if $\gamma$ is a 1-cycle in $S_v$ of length $\ell(\gamma)$, then $\mathcal{Q}_\sharp(\gamma)$ is a 1-cycle in a scaled copy $Y_v$ of $Y$.  Since $H_1(Y)=0$ and $Y$ is compact, the corresponding 1-cycle in $Y$ has homological filling area $\preceq \ell(\gamma)$.  Therefore the original cycle, $\mathcal{Q}_\sharp(\gamma)$, has homological filling area $\preceq \ell(\gamma) h_A(v)^2$.

Consider the restriction $\beta'$ of $\beta$ to $Z$; this is a 2-chain in $Z$, and $\partial\beta'=\alpha+\sum_{v\in \supp{\beta}\setminus L_A}\gamma_v$, where $\gamma_v$ is a 1-cycle in $S_v$.  We can construct a filling $\beta''$ of $\alpha$ in $L_A$ by combining the image $\mathcal{Q}_\sharp(\beta')$ with fillings of each of the $\mathcal{Q}_\sharp(\gamma_v)$'s.  Since $\beta'$ lies in $h_A^{-1}[-c\ell, c\ell]$, the restriction of $\mathcal Q$ to $\supp{\beta'}$ has $O(\ell)$ Lipschitz constant, and
$$\mass(\beta'')\preceq \ell^2\mass(\beta)+\ell^2 \sum_v \ell(\gamma_v)\preceq \ell^4+\ell^2 \sum_v \ell(\gamma_v).$$
Each 2-cell of $\beta$ contributes at most four 1-cells to the $\gamma_v$'s, so $\sum_v \ell(\gamma_v)\preceq \ell^2$, and $\mass(\beta'')\preceq \ell^4$, as desired.

Next we produce a quartic mass filling of any loop that lies entirely in a copy of $L_Q$.  
Such a loop $\alpha$ is labeled by generators of 
$$D\cong F_n(x_1,\dots,x_n) \rtimes_{(\phi,\phi)} F_2(t\bar u,t\bar v)$$
where $x_i=a_i\bar t$ for $1\le i < n$ and
$x_n=s\bar t$.
In this section we will use the notation $t_1=t\bar u$ and $t_2=t\bar v$. 
Let $w$ denote the word labeling $\alpha$, where
$$w=w_1\dots w_\ell.$$ 
Here the $w_i$ are generators and $w$ represents the identity.  

Let $\vartheta:F(x_1,\dots,x_n )\rtimes F_2(t_1, t_2) \to
F(x_1,\dots,x_n ) \rtimes \langle t_1 \rangle$ send $t_2$ to $t_1$ and
each other generator to itself.  
The word $\vartheta(w)$ lies in a free-by-cyclic
group which is CAT(0) and therefore has quadratic Dehn function.
Thus to fill our loop with quartic mass, it will be enough to reduce $w$ to
$\vartheta(w)$ in such a way that the reduction takes quartic mass.

We will achieve this reduction by first decomposing $w$ into subwords as follows.  
Let $p:F(x_1,\dots,x_n)\rtimes F_2(t_1, t_2)\to F_2(t_1, t_2)$ be the projection map.
Define $$w(i)=w_1\dots w_i$$ so that $w(0)=w(\ell)=1$.  
Now decompose $w$ as follows:
$$w=[p(w(0))w_1p(w(1))^{-1}][p(w(1))w_2p(w(2))^{-1}]\dots
[p(w(\ell-1))w_\ell p(w(\ell))^{-1}].$$ 
We can reduce $w$ to $\vartheta(w)$ by reducing each subword in this decomposition to its image under $\vartheta$.  
If $w_j=t_1^{\pm1}, t_2^{\pm1}$, then $p(w(j-1))w_{j}p(w(j))^{-1}$ is freely
equal to the identity, and no reduction is necessary.  
 Otherwise, $p(w(j-1))=p(w(j)) \in F(t_1, t_2)$.
Thus it suffices to reduce words of the form $g x_i \bar g$, with $g\in F(t_1,t_2)$, to 
$\vartheta(g x_i \bar g)$, or equivalently, to
fill loops with labels 
of the form $v=g x_i \bar g \vartheta(g \bar x_i \bar g)$.

Write $g=t_{r_1}^{d_1} \dots t_{r_m}^{d_m}$ where $r_i$ alternates
between $1$ and $2$ and $d_i\in \Z\setminus \{0\}$.  We proceed by
induction on $m$.

If $m=1$ and $g=t_1^d$ then there is nothing to do.  If $g=t_2^d$, then $v=
t_2^dx_it_2^{-d}t_1^d \bar x_it_1^{-d}$ can be written as the sum of four 1-cycles as shown in Fig.~\ref{fig:hom}.   
Writing $s_1=s\bar u$ and $s_2=s\bar v$, the words labeling the $1$-cycles are 
$t_2^ds_2^{-d}s_1^dt_1^{-d}$, 
$s_2^d (a_i\bar v)^{-d} (a_i\bar u)^d s_1^{-d}$, 
$ (a_i\bar v)^{d+1} s_2^{-(d+1)}   s_1^{d+1} (a_i\bar u)^{-(d+1)}$, and 
$s_2^{d+1}t_2^{-(d+1)}t_1^{(d+1)}s_1^{-(d+1)}$.
The first and the last are words representing the identity in the CAT(0) group 
$\langle t \rangle \times \langle s \rangle \times F(u,v)$, and so can be filled with quadratic mass.  
The middle two are generators of $\pi_1(L_A)$ and can be filled in $L_A$ with a scaled copy of $Y$ with quadratic mass.   These four fillings fit together to give a filling of $v$ with quadratic mass.  

\begin{figure}
\centering
\def\svgwidth{2.5in} 
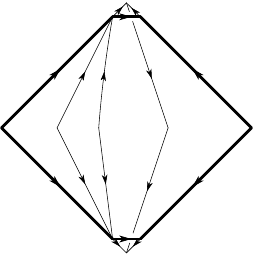 
\caption{\label{fig:hom} A homological filling of $v$.  The curve $v$ (thick line) is a sum of four 1-cycles (thin lines).}
\end{figure}

If $m>1$, then $g=g_0 t_{r_m}^{d_m}$.  Let $r=r_m$ and $d=d_m$ and let $g'=g_0 t_{3-r}^{d}$ (and note that
$\vartheta(g)=\vartheta(g')$).  As in the $m=1$ case, we can reduce
$t_r^dx_i
t_r^{-d}$ to $t_{3-r}^dx_i t_{3-r}^{-d}$ using quadratic area.   
This 
immediately lets us reduce
$$g x_i \bar g \vartheta(g \bar x_i \bar g)$$
to $$g'x_i g'^{-1} \vartheta(g' x_i^{-1} g'^{-1}).$$

Since we use $m$ steps, and $m\le \ell(g) \le \ell$, it takes area $\preceq \ell^3$
(and linear genus) to reduce $g x_i \bar g$ to $\vartheta(g \bar x_i \bar g)$.

Since each of the $\ell$ subwords in the decomposition of $w$ above can be reduced to its image under $\vartheta$ using mass $\preceq \ell^3$, the word $w$ can be reduced to $\vartheta(w)$ with mass $\preceq \ell^4$.  



Finally, we consider curves that travel through multiple copies of $L_{A_i}$ and $L_Q$.  We will need to make arguments based on the graph product decomposition of $H$, so it will be helpful to have a slightly different complex $L$ on which $H$ acts.  We construct $L$ by ``stretching'' each copy of $L_E$ in $L_G$ into a product $L_E\times [0,1]$.  
Let $Z$ be the complex obtained by gluing 
$X_Q$ and $n-1$ copies of $X_A$ to $n-1$ copies of $X_E\times [0,1]$ according to 
the graph product decomposition of $G$ and let $\widetilde{Z}$ be the universal cover of $Z$.
Then the homotopy equivalence $p:Z\to X_G$ which collapses each copy of 
$X_E\times [0,1]$ to a copy of $X_E$ lifts to a homotopy equivalence $\widetilde{p}:\widetilde{Z}\to \widetilde{X}_G$.
Then $H=\ker(h)\subset G$ acts geometrically on the level set 
$L=(h\circ \widetilde{p})^{-1}(0)$, because $H$ acts geometrically on $L_G$.
The following lemma describes the structure of $L$.
\begin{lemma}
  The level set $L$ intersects each vertex space of $\widetilde{Z}$ of the form
  $\widetilde X_Q$ in a copy of $L_Q$.  Likewise, it intersects each
  vertex space $\widetilde X_A$ in a copy of $L_A$, and
  each edge space $\widetilde X_E\times [0,1]$ in a copy of
  $L_E\times[0,1]$.  
\end{lemma}

Each edge in $L^{(1)}$ either lies in a copy of $L_Q$, lies in a copy
of $L_{A_i}$ for some $i$, or crosses from $L_E\times \{0\}$ to
$L_E\times \{1\}$, so we can classify the edges as $Q$-edges,
$A$-edges, or $E$-edges.  Consider a path of edges in $L^{(1)}$ of
length $\ell$ and call it $\alpha$.  

By standard arguments (i.e., the normal form theorem for graphs of
groups), $\alpha$ must have an ``innermost piece'', i.e., a subpath
which enters a copy of $L_{A_i}$ or $L_Q$ through a copy of $L_E$,
then leaves through the same $L_E$.  We write this as $t \gamma t'$,
where $t$ and $t'$ lie in the same copy of $L_E\times [0,1]$ and where
$\gamma$ is either a path of $A$-edges or a path of $Q$-edges.

Without loss of generality, suppose that the endpoints of $\gamma$ lie
in $L_E\times \{0\}$.  Call them $(v,0)$ and $(w,0)$, and let
$\gamma'$ be a geodesic in $L_E\times\{0\}$ from $v$ to $w$.  Then $p$
and $\gamma'$ form a loop $\theta$, and since they lie in the union of
a copy of $L_E\times[0,1]$ and a copy of $L_Q$ or $L_{A_i}$, there is
a 2-chain filling $\theta$ whose mass is $\preceq
(\ell+\ell(\gamma'))^4+\ell(\gamma')$.  But $L_{E}$ is undistorted in
$L$, since $L_{E}$ is undistorted in $X_{E}$ and $X_E$ is convex in
$X_G$, so $\ell(\gamma')\preceq \ell$, and the
filling area of $\theta$ is $\preceq \ell^4$.

Repeating this process for the loop $\alpha-\theta$ inductively, we
obtain a filling of $\alpha$.  Each time we repeat the process, the
number of $E$-edges in $\alpha$ decreases by 2, and each time we
repeat the process we use a filling of mass $\preceq \ell^4$, so the
total filling area of $\alpha$ is $\preceq \ell^5$.
\end{proof}

\section{Lower bound on homotopical Dehn function}

Recall the situation we are in:  the group $G$ is a graph product of groups 
$Q$ and (copies of) $A$ along edge groups $E$, and $\widetilde{X}_G,\widetilde X_Q,\widetilde X_A,\widetilde X_E$ are 
contractible spaces on which respectively $G,Q,A,E$ act geometrically.  
Meanwhile $H< G$ is a graph product of $D$ (the double of a SLOG group $B$) and 
copies of $H_A$ (the Bestvina-Brady group associated to the RAAG $A$).  
In this section we prove the following.

\begin{theorem}\label{thm:lowerDehn}
With $G$ (hence $H,D,Q,A,B$) as above, we have $$\delta_H \succeq \dist_B,$$
where $\dist_B$ is as in Sec.~\ref{sec:mainthm}.
\end{theorem}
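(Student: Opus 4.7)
The plan is to reduce to the Bridson--Haefliger lower bound $\dist_B \preceq \delta_D$ (Theorem~\ref{thm:bhLowerDouble}) by showing $\delta_D \preceq \delta_H$. I would do this by constructing a group-theoretic retraction $r : G \to Q$ that restricts to a retraction $H \to D$, and then invoking the standard van Kampen diagram push-forward argument for retracts.

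To construct $r$, set $r$ to be the identity on the generators $a_1,\dots,a_{n-1},s,t,u,v$ of $Q$, and set $r(y^i_j)=u$ for every $i,j$. Since $u$ lies in the $F_2(u,v)$ factor of $Q=B\times F_2(u,v)$, it commutes with every element of $B$ and with itself, so each commutation relation $[y^i_j,x]=1$ of $A_i$ is preserved under $r$ provided $x\neq v$. I would arrange the Thompson complex $Y$ so that no $y_j$ is adjacent to $v$ in $Y$, using the freedom in the annulus-attachment step of Section~\ref{sec:mainthm}; with such a choice, $r$ is a well-defined homomorphism and a retraction of $G$ onto $Q$. Moreover $h_Q(r(y^i_j))=h_Q(u)=1=h(y^i_j)$, so $h_Q\circ r=h$ on every generator and hence everywhere on $G$. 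This forces $r(H)\subseteq \ker h_Q=D$, and since $r$ is the identity on $Q\supseteq D$, its restriction $r|_H$ is a retraction $H\to D$.

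Given the retraction, $\delta_D\preceq\delta_H$ follows routinely. Fix finite presentations $\langle S_D\mid R_D\rangle$ of $D$ and $\langle S_H\mid R_H\rangle$ of $H$ with $S_D\subseteq S_H$. Each relator $R\in R_H$ maps to a word $r(R)$ in $S_D^{\pm}$ trivial in $D$, and since $R_H$ is finite there is a uniform constant $C$ so that each $r(R)$ is fillable by at most $C$ relators of $R_D$. Applying $r$ cell-by-cell to any van Kampen diagram over $H$ of area $A$ for a word $w$ in $S_D^{\pm}$ produces a van Kampen diagram over $D$ for the same $w$ of area at most $CA$. Hence $\delta_D(\ell)\preceq \delta_H(\ell)$, and combining with Theorem~\ref{thm:bhLowerDouble} yields $\delta_H\succeq \dist_B$, as required.

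The main obstacle I expect is the combinatorial compatibility needed for $r$ to be a homomorphism: namely, that the link of $v$ in $Y$ contains none of the $y_j$. If this normalization is not readily available for a particular Thompson complex, one can still salvage the construction by allowing $r(y^i_j)\in\{u,v\}$ (or a word in $\{u,v\}$ with $h_Q$-value $1$) chosen according to the adjacencies of $y^i_j$ in $\Lambda$, verifying the required commutations case-by-case. Alternatively, one could work geometrically and prove the inequality directly by pushing disks in $L_G$ bounding loops in $L_Q$ back into $L_Q$, in the spirit of the pushing-map arguments of Section~\ref{sec:topology}.
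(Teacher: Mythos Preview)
Your retraction strategy is natural, but the combinatorial normalization you need on $Y$ is incompatible with $Y$ being a Thompson complex, so the approach fails for a structural reason, not just a bookkeeping one.

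Suppose first that you arrange $Y$ so that no $y_j$ is adjacent to $v$. Then $v$ has only the neighbors $a$ and $s$, and since $a\not\sim s$ (required for $ausv$ to be a flag $4$-cycle), $v$ lies in no $2$-cell. Hence $Y$ is obtained from the full subcomplex $Y'$ on the remaining vertices by attaching the arc $a\text{--}v\text{--}s$, and $\pi_1(Y)\cong\pi_1(Y')*\Z$. This is never simple, contradicting the Thompson hypothesis. Your fallback of colouring each $y_j$ by an element of $\{u,v\}$ (or any word in $F_2(u,v)$) fares no better: the colouring is well defined exactly when $\{a,s\}$ separates $u$ from $v$ in $\Lambda$, i.e.\ when $\Lambda\setminus\{a,s\}$ is disconnected. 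In that case $Y$ decomposes as $Y_1\cup_{\{a,s\}}Y_2$ with each $Y_j$ connected and containing both $a$ and $s$, and collapsing an arc from $a$ to $s$ in $Y_1$ shows $\pi_1(Y)\cong\pi_1(Y_1)*\pi_1(Y_2)*\Z$, again not simple. So for any Thompson complex $Y$ there is a path from $u$ to $v$ through the $y_j$'s, and along such a path the $F_2(u,v)$-components of the $r(y_j)$ cannot all be chosen compatibly; at some vertex the image must drop into $B$, and you have given no mechanism for satisfying the resulting commutation constraints in $B$ (where centralizers depend on the particular SLOG automorphism $\phi$).

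The paper avoids this entirely: it takes the Bridson--Haefliger curve $\gamma\subset L_Q$ and shows, using the tree-of-spaces structure of $L$, that for \emph{any} disk $\tau:D^2\to L$ filling $\gamma$, the set $\tau(D^2)\cap L_Q$ already supports a $2$-chain with boundary $[\gamma]$. The key lemma is that a punctured disk in $L_A'$ with boundary in $\partial L_A'$ has its outer boundary homologous to a combination of inner boundaries lying in the \emph{same} copy of $L_E$; this uses precisely that $\pi_1(\lozenge)\hookrightarrow\pi_1(Y)$ is injective (because $g$ has infinite order). Your final ``alternatively, push disks back into $L_Q$'' is exactly this argument, but it is the heart of the proof and requires the lemma above, not merely the pushing maps of the previous section.
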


In order to prove this, we will need the following refinement of Thm.~\ref{thm:bhLowerDouble}:
\begin{lemma}\label{lem:lowerDoubleSize}
  For all $\ell>0$, there is a curve $\gamma:S^1\to L_Q$ of length
  $\sim \ell$ such that if $\beta\in C_2(L_Q)$ is a filling of $\gamma$, then 
  $$\area \supp \beta \succeq \dist_{B}(\ell),$$
  where $\supp \beta$ is the support of $\beta$.
\end{lemma}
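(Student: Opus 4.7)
The plan is to construct an explicit hard-to-fill curve $\gamma$ of length $\sim \ell$ in $L_Q^{(1)}$ and then extract a lower bound on $\area \supp \beta$ via a corridor argument adapted to 2-chains. The key refinement of Theorem~\ref{thm:bhLowerDouble} is to show not merely that a chain filling has large mass, but that it must use many \emph{distinct} 2-cells; this rests on the observation that each 2-cell of $L_Q$ lies on at most one $t_1$-corridor, so the total $t_1$-corridor length gives a direct lower bound on the support area.

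For the curve, choose $g \in F_n$ with $|g|_B \le \ell$ and $|g|_{F_n} \succeq \dist_B(\ell)$, and a word $w$ of length $\le \ell$ in the SLOG generators representing $g$. Since $t_1 x_i \bar t_1 = \phi(x_i) = t_2 x_i \bar t_2$ in $Q = B \times F(u,v)$, the substitutions $w_1 := w|_{t \to t_1}$ and $w_2 := w|_{t \to t_2}$ both represent $g$ in $F_n \subset D$. Each $t_j$ is already a diagonal generator of $L_Q^{(1)}$ (e.g.\ $t_1 = t \bar u$ is a diagonal of the $[t,u]$-square), and each $x_i = a_i \bar t$ can be realized as a length-$2$ edge-path in $L_Q^{(1)}$, namely $(a_i \bar u)\bar t_1$. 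Interpolating these gives a closed loop $\gamma := w_1 \bar w_2$ in $L_Q^{(1)}$ of length $\sim \ell$, trivial in $D = \ker h_Q$.

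Now fix any $\beta \in C_2(L_Q)$ with $\partial\beta = \gamma$. The 2-cells of $L_Q$ arise as diagonal slices of 3-cubes in $\widetilde X_Q = \widetilde X_B \times \widetilde R_2$, each of the form (2-cell of $\widetilde X_B$) $\times$ (edge of $\widetilde R_2$). A case analysis of the available 3-cubes---SLOG-squares times $u$- or $v$-edges, and $[s,t]$-squares times $u$- or $v$-edges---shows that each 2-cell of $L_Q$ contains at most two $t_1$-edges in its boundary (two being achieved only on slices of $[s,t,u]$-cubes). Define the $t_1$-graph $\Gamma_{t_1}$ of $\supp\beta$: vertices are cells of $\supp\beta$ incident to a $t_1$-edge and edges are interior $t_1$-edges shared between two such cells. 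Maximum degree is $2$, so the components of $\Gamma_{t_1}$ are paths and cycles, with path endpoints corresponding bijectively to the $t_1$-edges of $\gamma$. The standard HNN corridor reading, applied along each path, conjugates the boundary $x_i$-letters through iterated uses of $t_1 x_i \bar t_1 = \phi(x_i)$; stringing the path traces together recovers $g$ as a word in $F_n$ (cycles contribute nothing to $\partial\beta$), so the total length of the $t_1$-corridors is $\succeq |g|_{F_n}$. Since each cell of $\supp\beta$ lies on at most one $t_1$-corridor, $\area\supp\beta \succeq |g|_{F_n} \succeq \dist_B(\ell)$.

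The main obstacle is formalizing the corridor bookkeeping for a general integer 2-chain rather than a disk diagram: cells carry integer coefficients, interior $t_1$-edges satisfy cancellation constraints from $\partial\beta = \gamma$, and the combinatorial topology of $\supp\beta$ can be more intricate than that of a disk. One must verify that the boundary traces of the $t_1$-corridors really assemble into an $F_n$-word representing $g$ (and not some simpler word arising from cancellations), and handle corridors that close into cycles or link boundary $t_1$- to boundary $t_2$-edges through anomalous cells. An alternative that sidesteps some of this delicacy is to lift $\beta$ to a (possibly disconnected, higher-genus) surface diagram $\Sigma \to L_Q$ with $\area\Sigma = \mass\beta$, apply the Bridson--Haefliger corridor argument directly to $\Sigma$ to obtain $\area\Sigma \succeq \dist_B(\ell)$, and then transfer the bound to $\area\supp\beta$ using that the map $\Sigma \to \supp\beta$ is boundedly-many-to-one on corridor cells.
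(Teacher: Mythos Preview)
The paper does not actually prove this lemma: it takes $\gamma$ to be the curve $w_n^{-1}w_n'$ from \cite[Thm.~III.$\Gamma$.6.20]{BridHaef} and asserts in one sentence that the argument there, which bounds the area of the \emph{image} of any disk filling, ``applies to any chain filling $\gamma$ as well.'' Your curve and overall strategy are the same. Where you go beyond the paper---running corridors directly on $\supp\beta$---there is a real gap. You correctly note that each $2$-cell of $L_Q$ carries at most two $t_1$-edges, so each \emph{vertex} of $\Gamma_{t_1}$ has degree at most $2$; but a single $t_1$-edge of $L_Q$ (a diagonal of a $[t,u]$-square) can lie on many $2$-cells of $L_Q$, one for each $2$-cell of $\widetilde X_B$ containing that $t$-edge. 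So an ``interior $t_1$-edge'' need not be shared by exactly two cells of $\supp\beta$, the incidence structure is a hypergraph rather than a graph, and there is no clean decomposition into corridors. The condition $\partial\beta=\gamma$ only forces the signed coefficients around each interior $t_1$-edge to cancel; it does not pair cells.

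Your surface-diagram alternative also falls short of what the lemma asks: lifting gives a bound on $\mass\beta$, not on $\area\supp\beta$, and the assertion that $\Sigma\to\supp\beta$ is boundedly-many-to-one on corridor cells is exactly the missing step (nothing prevents a single cell of $L_Q$ from carrying a large coefficient in $\beta$ and hence appearing many times in $\Sigma$). A route that does bound $\area\supp\beta$ is to replace the corridor in the filling by a wall in the target. Choose $\gamma$ so that under the projection $\pi:L_Q\subset\widetilde X_B\times\widetilde R_2\to\widetilde R_2$ it crosses a fixed edge $e$ of $\widetilde R_2$ exactly twice; then $W=\pi^{-1}(\text{midpoint of }e)$ is a half-integer level set of $h_B$ in $\widetilde X_B$, hence a tree since $\alk(X_B)$ and $\dlk(X_B)$ are trees, and the two crossing points lie at distance $\succeq|g|_{F_n}$ in $W$. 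Intersecting $\beta$ with $W$ produces a $1$-chain in this tree with boundary those two points, so its support must contain the geodesic between them; since each edge of $W$ lies in a unique $2$-cell of $L_Q$, this yields $\area\supp\beta\succeq|g|_{F_n}\succeq\dist_B(\ell)$.
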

We can take $\gamma$ to be the curve $w_n^{-1}w_n'$ used in the proof
of Thm.\ III.$\Gamma$.6.20 in \cite{BridHaef}; the proof in
\cite{BridHaef} shows that the image of any disk filling $\gamma$ has
to have area $\succeq \dist_{B}(\ell)$, but the same bound applies to
any chain filling $\gamma$ as well. 

\begin{proof}[Proof of Theorem~\ref{thm:lowerDehn}]
  Let $L$ be the complex constructed in the previous section, on which
  $H$ acts; this is made up of copies of $L_A$ and $L_Q$, joined along
  copies of $L_E\times[0,1]$.  The translates of the set
  $L_E\times\{1/2\}$ separate $L$ into infinitely many components, each
  of which is either a copy of $L_A$ glued to copies of $L_E\times
  [0,1/2]$ or a copy of $L_Q$ glued to copies of $L_E\times [1/2,1]$.
  Let $L_A'$ and $L_Q'$ be complexes isometric to each type of piece.
  We will refer to the union of the copies of $L_E\times\{1/2\}$ that lie in $L_A'$
  and $L_Q'$ as $\partial L_A'$ and $\partial L_Q'$.

  Fix a ``root'' copy $L_0$ of $L_Q'$ in $L$.  Let $\beta:D^2\to L_0$
  be as in Lem.~\ref{lem:lowerDoubleSize} and let $\gamma$ be its
  boundary.  Let $\tau:D^2\to L$ be a filling of $\gamma$ in $L$ and
  let $[\gamma]\in C_1(L_0;\R)$ be the fundamental class of $\gamma$.
  We will show that there is a chain $\D\in C_2(L_0;\R)$ which fills
  $[\gamma]$ and is supported on $L_0\cap \tau(D^2)$.  Then
  $\D-[\beta]$ is a 2-cycle in $L_0$, but since $L_0$ is 2-dimensional
  and contractible, this implies that $\D=[\beta]$.  Therefore,
  $\tau(D^2)$ contains $\beta(D^2)$, and 
  $$\delta_D(\ell)\sim \area \beta \le \area \tau$$
  as desired.

  First, we use the structure of $L$ to break a disk in $L$ into
  punctured disks whose images lie in copies of $L_Q'$ and $L_A'$.
  Homotope $\tau$ so that it is transverse to each copy of
  $L_E\times\{1/2\}$.  The preimages of the $L_E\times\{1/2\}$'s then
  divide $D^2$ into \emph{pieces} $M_1,\dots, M_n\subset D^2$, and for
  each $i$, $\tau|_{M_i}$ is a punctured disk in a copy of $L_A'$ or
  $L_Q'$.  Each $M_i$ has a distinguished boundary component which is
  homotopic to $\partial D^2$ in $D^2 \setminus M_i$, and we call that
  component the \emph{outer boundary} of $M_i$, denoted $\partial_oM_i$;
  we call the other boundary curves \emph{inner boundaries}.  Each
  boundary curve of $M_i$ either coincides with $\partial D^2$ or lies
  in the preimage of one of the $L_E\times\{1/2\}$'s.

  Next, we claim that if $M$ is a punctured disk in $L_A'$ with boundary in
  $\partial L_A'$, the topology of $L_A$ places strong
  restrictions on $M$.  More precisely,
  \begin{lemma}\label{lem:LaPuncDiscs}
    Suppose that $f:M\to L_A'$ is a punctured disk in $L_A'$ with boundary in
    $\partial L_A'$.  Suppose that $M$ has a distinguished boundary
    component $\partial_o M$ and other boundary components 
    $\partial_1M\dots \partial_mM$.  If $[\partial_iM]\in C_1(\partial
    L_A';\R)$ is the fundamental class of $\partial_iM$, then there
    are $a_i\in \R$ such that
    $$f_\sharp[\partial_oM]=\sum_{i=1}^m a_i f_\sharp[\partial_iM].$$
    Furthermore, we may assume that $a_i\ne 0$ only if $f(\partial_i
    M)$ and $f(\partial_oM)$ lie in the same copy of
    $L_E\times\{1/2\}$.
  \end{lemma}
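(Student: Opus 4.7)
The plan is to exploit the $\pi_1$-injectivity of each inclusion of a copy of $L_E$ into $L_A$.  By Theorem~\ref{thm:BestBrad}, $L_A$ is homotopy equivalent to an infinite wedge $\bigvee_v Y_v$ of scaled Thompson complexes indexed by vertices $v \in \widetilde{X}_A \setminus L_A$, so $\pi_1(L_A) \cong \ast_v \pi_1(Y_v)$.  Distinct copies of $L_E$ in $L_A$ arise from distinct cosets of $E$ in $A$ and therefore have disjoint vertex sets in $\widetilde{X}_A$, so the free generators $\lozenge_v$ of $\pi_1(L_E) \cong F_\infty$ embed into pairwise distinct free-product factors, each as the infinite-order element $g \in \pi_1(Y_v)$.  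A reduced-word analysis in the free product then yields injectivity of $\pi_1(L_E) \hookrightarrow \pi_1(L_A)$.

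First I would push the identity $\partial[M] = [\partial_o M] - \sum_{i=1}^m [\partial_i M]$ in $C_1(M)$ forward by $f$ to obtain $\partial f_\sharp[M] = f_\sharp[\partial_o M] - \sum_{i=1}^m f_\sharp[\partial_i M]$ in $C_1(L_A')$.  Next, to isolate the contributions from boundary components lying in $C_0$, I would deformation-retract each appendage $L_E \times [0,1/2]$ corresponding to a component $C_\alpha$ with $\alpha \ne 0$ onto its base copy of $L_E \subset L_A$.  After this retraction each $f_\sharp[\partial_i M]$ with $i \notin I$ becomes a 1-cycle in some copy of $L_E$ in $L_A$; since $H_1(L_A) = 0$, it bounds a 2-chain in $L_A$.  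Combining these fillings with the retracted $f_\sharp[M]$ yields a 2-chain $\beta$ whose boundary is $f_\sharp[\partial_o M] - \sum_{i \in I} f_\sharp[\partial_i M]$, now regarded as a cycle in $C_0$.

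To upgrade this bounding-in-$L_A \cup (L_E \times [0,1/2])$ statement to the claimed $H_1(C_0;\R)$-level identity, I would pass to $\pi_1$: the standard presentation of $\pi_1$ of a punctured disk gives a product-of-conjugates identity $f_*[\partial_o M] = \prod_{i=1}^m w_i\, f_*[\partial_i M]\, w_i^{-1}$ in $\pi_1(L_A')$, which under projection onto each factor $\pi_1(Y_v)$ with $v \in V_0$ (the vertex set of the copy of $L_E$ corresponding to $C_0$) reduces to an equation of the form $g_v^{N_v} = \prod_{i \in I} h_i\, g_v^{n_{v,i}}\, h_i^{-1}$ in $\pi_1(Y_v)$.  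Here $N_v$ and $n_{v,i}$ denote the $\lozenge_v$-coefficients of $f_\sharp[\partial_o M]$ and $f_\sharp[\partial_i M]$ in $H_1(C_0;\R) = \bigoplus_{v \in V_0} \R[\lozenge_v]$, and the real coefficients $a_i$ are to be extracted from this infinite family of $\pi_1$-level equations.

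I expect the main obstacle to be this final extraction step.  Simplicity of $\pi_1(Y_v)$ trivializes its abelianization, so the individual $\pi_1$-equations abelianize to $0 = 0$ and the $a_i$ cannot simply be read off from naive homological invariants.  The argument will instead have to operate carefully on the $\pi_1$-side, combining the uniform presentation of $\pi_1(M)$ with the $\pi_1$-injectivity of each $\pi_1(C_\alpha) \hookrightarrow \pi_1(L_A')$, so as to pin down a single real-linear relation among the $\lozenge_v$-coefficients that is valid uniformly in $v \in V_0$.
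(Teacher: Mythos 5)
Your proposal correctly identifies the two structural inputs ($\pi_1$-injectivity of $\lozenge\subset Y$ and the wedge decomposition of $L_A'$), but it has a genuine gap, and the route you chose cannot close it. First, the step where you push $\partial[M]$ forward and then fill the stray boundary cycles inside $L_A$ is vacuous: it only shows that $f_\sharp[\partial_oM]-\sum_i f_\sharp[\partial_iM]$ bounds in $L_A'$, and since $H_1(L_A')=0$ \emph{every} cycle bounds there. The content of the lemma is a chain-level identity in the $1$-complex $\partial L_A'$ (equivalently in $H_1(\partial L_A')=Z_1(\partial L_A')$, which is infinitely generated), i.e.\ \emph{before} mapping into $L_A'$, and no 2-chain in $L_A'$ gives you that. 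Second, your $\pi_1$-strategy projects the product-of-conjugates relation onto each free factor $\pi_1(Y_v)$ and hopes to extract $N_v=\sum_i a_i n_{v,i}$ from relations of the form $g_v^{N_v}=\prod_i c_i\,g_v^{n_{v,i}}c_i^{-1}$. As you yourself observe, $\pi_1(Y_v)$ is perfect, so these relations carry no abelian information: in a simple group every element is a product of conjugates of any nontrivial element in many unrelated ways, so such an equation imposes no linear constraint on the exponents, let alone one with coefficients $a_i$ independent of $v$. The final ``extraction step'' you flag as the main obstacle is not merely unfinished; along this route it is unobtainable.

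The missing idea is to decompose the \emph{domain} rather than the target. After homotoping $f$ to a map $f'$ into the wedge $Y_\infty$ whose preimage of the wedge point is a graph, $M$ is cut into punctured disks $P_j$, each mapping into a single summand $Y_{\alpha_j}$ with all boundary curves landing in the single circle $\lozenge_{\alpha_j}$. There the linear-combination statement is essentially automatic: $Z_1(\lozenge_{\alpha_j};\R)\cong\R$ is one-dimensional, so $f'_\sharp[\partial_oP_j]$ is a multiple of any nonzero $f'_\sharp[\lambda_k]$, and the only case needing an argument is when all inner classes vanish --- which forces $f'(\partial_oP_j)$ to bound in $Y_{\alpha_j}$ and hence, by $\pi_1$-injectivity of $\lozenge\subset Y$ (i.e.\ the infinite order of $g$), to have vanishing fundamental class as well. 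One then assembles these local scalar relations into the global one by induction over the nesting partial order on the boundary curves of the $P_j$'s, and finally projects onto the $C_1(L_{E,0})$ summand to discard the terms from other copies of $L_E\times\{1/2\}$ (this last projection is the one piece of your plan that matches the paper). Without the cut-and-nest induction, no amount of care on the $\pi_1$ side will produce coefficients $a_i$ valid uniformly in $v$.
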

  
  We will prove this lemma at the end of the section, after we use it
  to construct $\D$.  
  Let $M_i$ be one of the pieces of $D^2$, and suppose that $\tau$
  takes $\partial_oM_i$ to a curve in $L_0$.  (For example, take $M_i$
  such that $\partial_o M_i=\partial D^2$.)  Then $\tau(M_i)$ is
  either a punctured disk in $L_0$ or a punctured disk in a copy of
  $L_A'$ which neighbors $L_0$.  Let $\bar M_i\subset D^2$ be the disk
  bounded by $\partial_o M_i$.  We claim that there is a chain
  $\Delta_i$ in $\tau(\bar M_i)\cap L_0$ such that $\partial
  \Delta_i=\tau_\sharp[\partial_o M_i]$.
  
  We proceed by induction on the number $n$ of pieces of $D^2$
  contained in $\bar M_i$.  If $n=1$, then $\bar M_i=M_i$.  Then, as
  before, $\tau(M_i)$ is either a disk in $L_0$ or a disk in a copy
  of $L_A'$ which neighbors $L_0$.  In the first case, we can take
  $\Delta_i=\tau_\sharp[M_i]$.  In the second case, the lemma implies that
  $\tau_\sharp[\partial M_i]=0$, so we can take $\Delta_i=0$.

  Suppose that the claim is true for $n-1$ and let $M_i$ be a piece of
  $D^2$ such that $\tau(\partial_oM_i)\subset L_0$ and $\bar{M}_i$ is
  comprised of $n$ pieces of $D^2$.  If $\tau(M_i)$ is a punctured
  disk in $L_0$, then $\tau$ takes each inner boundary of $M_i$ to a curve in $L_0$
  and each inner boundary bounds a disk in $D^2$ with at most $n-1$ pieces.  By
  induction, each $\tau_\sharp[\partial_j M_i]$ bounds a chain $\Delta_{ij}$ in
  $\tau(\bar M_i)\cap L_0$.  Consequently, we can get the required filling of $\tau_\sharp[\partial_oM_i]$ as
  $$\Delta_i=\sum_j \Delta_{ij}+\tau_\sharp[M_i].$$

  On the other hand, if $\tau(M_i)$ is a punctured disk in a copy of
  $L_A'$, then Lemma~\ref{lem:LaPuncDiscs} implies that we can write
  $$\tau_\sharp[\partial_o M_i]=\sum_j a_{ij} \tau_\sharp[\partial_j M_i],$$
  where each $\partial_j M_i$ is an inner boundary component of
  $M_i$ which lies in $\partial L_0$.  By induction, each of these can
  be filled by a chain $\Delta_{ij}$ in $\tau(\bar M_i)\cap L_0$, and if
  $$\Delta_i=\sum_j a_{ij} \Delta_{ij},$$
  then $\Delta_i$ fills $\tau_\sharp[\partial_o M_i]$.
  
  Therefore, $\tau(\bar M_i)\cap L_0$ supports a chain filling $\gamma$. 
\end{proof}

\begin{proof}[Proof of Lemma~\ref{lem:LaPuncDiscs}]
  Let $Y$ be the complex used in the construction of $A$ and
  $\lozenge\subset Y$ be as in Sec.~\ref{sec:topology}.  Choose a
  basepoint $*\in Y$ such that $*\in \lozenge$.  Then by Theorem~8.6
  of \cite{BestBrad}, $L_A'$ is homotopy equivalent to an infinite
  wedge sum of copies of $Y$,
  $$Y_\infty=\bigvee_{\alpha \in S_A} Y_\alpha$$
  (in fact, $S_A$ corresponds to the set of vertices in $X_A\setminus
  L_A$).  Similarly, $\partial L_A'$ is made up of disjoint copies of
  $L_E\times\{1/2\}$; call these $L_{E,0},L_{E,1},\dots$, where
  $f(\partial_o M)\subset L_{E,0}$.  Each of these is homotopy
  equivalent to a wedge sum indexed by a subset $S_{E,i}\subset S_A$,
  $$\lozenge_{\infty,i}=\bigvee_{\alpha \in S_{E,i}} \lozenge_\alpha.$$
  The $S_{E,i}$'s partition $S_A$ into disjoint sets.  We can consider
  each of the $\lozenge_{\infty,i}$'s as a subset of $Y_\infty$, and
  we can define a homotopy equivalence $h:L_A'\to Y_\infty$
  that restricts to a homotopy equivalence $L_{E,i}\to
  \lozenge_{\infty,i}$ on each $L_{E,i}$.

  Let $*\in Y_\infty$ be the basepoint
  of the wedge sum.  Let 
  $$f':(M,\partial M)\to (Y_\infty, \bigcup_i \lozenge_{\infty,i})$$ 
  be a map which differs from $h\circ f$ by a small homotopy such that
  $f'^{-1}(*)$ is a graph in $M$.  We can further require that if
  $\lambda$ is a boundary component of $M$ and $f(\lambda)\subset
  L_{E,i}$, then $f'(\lambda)\subset \lozenge_{\infty,i}$.  Then
  $f'^{-1}(*)$ cuts $M$ into punctured disks $P_1,\dots, P_k$,
  and for each $i$, we can choose an $\alpha_i\in S_A$ such that
  $f'(P_i)\subset Y_{\alpha_i}$ and $f'(\partial P_i)\subset
  \lozenge_{\alpha_i}$.

  Consider $M$ as a subset of $\R^2$, embedded so that $\partial_o M$
  is the outer boundary of the subset.  For each $i$, let $D_i$ be the disk bounded by
  $\partial_iM$.  This embedding lets us choose an outer boundary
  component $\partial_oP_i$ for each $P_i$.  Furthermore, each closed
  curve $\lambda$ in $M$ has an inside, and we can use this to put a
  partial ordering on the boundary curves of the $P_i$'s.  If
  $\lambda$ and $\lambda'$ are two boundary curves, we write
  $\lambda\prec \lambda'$ if the inside of $\lambda$ is a subset of
  the inside of $\lambda'$.

  If $\lambda_1,\dots, \lambda_l$ are the inner boundary components of
  $P_i$, we claim that $f'_\sharp[\partial_o P_i]\in C_1(\lozenge_{\alpha_i})$ is a linear
  combination of the $f'_\sharp[\lambda_j]$'s.  All of these chains are in fact 1-cycles in
  $\lozenge_{\alpha_i}$, and since $Z_1(\lozenge_{\alpha_i})\cong \R$
  it's enough to show that if $f'_\sharp[\partial_o P_i]\ne 0$, then
  one of the $f'_\sharp[\lambda_j]$'s is nonzero too.  But if
  $f'_\sharp[\lambda_j]=0$ for all $j$, then $f'(\lambda_j)$ is a
  null-homotopic curve for all $j$, and so $f'(\partial_o P_i)$ is the
  boundary of a disk in $Y_{\alpha_i}$.  Since the inclusion
  $\lozenge\subset Y$ is $\pi_1$-injective, this means that if
  $f'_\sharp[\lambda_j]=0$ for all $j$, then
  $f'_\sharp[\partial_oM]=0$ as well, as desired.

  We claim that if $\lambda$ is a boundary curve of some $P_i$, then
  $f'_\sharp[\lambda]\in C_1(\bigcup_i \lozenge_{\infty,i})$ is a
  linear combination of the $f'_\sharp[\partial_j M]$'s.  We proceed
  by induction on the number of boundary curves $\lambda'$ with
  $\lambda'\prec \lambda$.  If this number is 0, then $\lambda$ is one
  of the $\partial_jM$, and there's nothing to prove.  
  Otherwise, the inside of $\lambda$ is a union of $P_i$'s and
  $D_i$'s, so $[\lambda]$ is a sum of $[\partial P_i]$'s and
  $[\partial_i M]$'s.  By induction, if $P_i$ is inside $\lambda$ and
  $\lambda\ne \partial_oP_i$, then $f'_\sharp[\partial P_i]$ is a
  linear combination of the $f'_\sharp[\partial_j M]$'s, so
  $f'_\sharp[\lambda]$ is a linear combination of the
  $f'_\sharp[\partial_j M]$'s too.

  Thus, there are $a_i\in \R$ such that 
  $$f'_\sharp[\partial_oM]=\sum_i a_i f'_\sharp[\partial_i M],$$
  where the equality is taken in $C_1(\bigcup_i \lozenge_{\infty,i})$, so 
  $$f_\sharp[\partial_oM]=\sum_i a_i f_\sharp[\partial_i M]$$
  in $H_1(\partial L_A')$.  Since $\partial L_A'$ is 1-dimensional,
  the equality in fact holds in $C_1(\partial L_A')$ as well.
  Finally, $C_1(\partial L_A')=\bigoplus_i C_1(L_{E,i})$, and if we
  project the above equation to the $C_1(L_{E,0})$ factor, we get
  $$f_\sharp[\partial_oM]=\sum_{\{i\mid f(\partial_i M)\subset L_{E,0}\}}  a_i f_\sharp[\partial_i M],$$
  as desired.
\end{proof}

\bibliographystyle{siam}
\bibliography{modRAAG.bib}

\earticle
\end{document}